\newtheorem{theorem}{Theorem}[section]
\newtheorem{lemma}[theorem]{Lemma}
\newtheorem{corollary}[theorem]{Corollary}
\newtheorem{proposition}[theorem]{Proposition}
\theoremstyle{definition}
\newtheorem{definition}[theorem]{Definition}
\theoremstyle{remark}
\numberwithin{equation}{section}
\begin{document}
\title{On the  Cohomology Comparison Theorem\\ }
\subjclass[2000]{Primary: 18, Secondary: 16. \\Keywords: Hochschild,
Derived Category}
\author[Alin  Stancu]{\textbf{Alin  Stancu}\\
Department of Mathematics, Columbus State University, Columbus, GA
31907, USA\\}

\email{stancu\_alin1@columbusstate.edu}

\date{}

\begin{abstract}
A relative derived category for the category of modules over a
presheaf of algebras is constructed to identify the relative Yoneda
and Hochschild cohomologies with its homomorphism groups. The
properties of a functor between this category and the relative
derived category of modules over the algebra associated to the
presheaf are studied. We obtain a generalization of the $Special$ $
Cohomology$ $Comparison$ $Theorem$ of M. Gerstenhaber and S. D.
Schack.

\end{abstract}

\maketitle

\newpage

\section{Introduction}
$\;$Hochschild cohomology of a $k$ - algebra $A$, denoted here
$\mathrm{H}^{\bullet}(A,-)$, plays an important role in the study of
associative algebras, by serving as a tool in the deformation theory
of this class of algebras where, broadly speaking, deformations of
$A$ are parameterizations $A_t$, of associative algebras, such that
for $t=0$ one obtains $A$. We mention here only two of its many
other interesting properties: first, separable algebras $A$ are
characterized by $\mathrm{H}^{1}(A,-)=0$ and second, as discovered
by Gerstenhaber, $\mathrm{H}^{\bullet}(A,A)$ has a rich algebraic
structure (of $G$- algebra). In fact, one need not to restrict to a
single algebra and, as M. Gerstenhaber and S. D. Schack did, may
consider deformations of presheaves of algebras, or more general of
diagrams of algebras, where the naturally defined Hochschild
cohomology plays a similar role. The Hochschild cohomology of
presheaves is interesting as a step to subsuming the deformation
theory of complex manifolds in the deformation theory of associative
algebras. The authors mentioned above associated to each presheaf of
algebras $\mathbb{A}$ a single algebra $\mathbb{A}!$ and proved the
$Special $ $Cohomology$ $Comparison$ $Theorem$ which states that
Yoneda and Hochschild cohomologies of the presheaf and the algebra
associated to the presheaf are isomorphic.

Note that Yoneda and Hochschild cohomologies are $\mathbf{relative}$
theories since $k$ is a commutative ring that is not necessarily a
field.

In this paper we develop a relative derived category,
$\mathcal{D}_k^{-} ({\mathbb{A}}-{\mathrm{bimod}})$, of the category
of  bimodules over a presheaf $\mathbb{A}$ of $k$-algebras, one
where the relative Yoneda cohomology,
$\mathrm{Ext}^{i}_{\mathbb{A}-\mathbb{A},k}(\mathbb{M},
\mathbb{N})$, so in particular Hochschild cohomology, can be
regarded as homomorphism groups,
$Mor_{\mathcal{D}_k^{-}({\mathbb{A}}-{\mathrm{bimod}})}
(\mathbb{M}_{\bullet},\mathbb{N}_{\bullet}{[i]})$. The reader should
be aware that the term `presheaf of $k$-algebras' is used to
describe functors $\mathbb{A}$, defined on  posets $\mathcal{C}$,
with images in the category of $k$-algebras.  In this context, we
also show that the functor !, induced between the relative derived
categories of $\mathbb{A}$-bimod and $\mathbb{A!}$-bimod, is full
and faithful and we obtain a generalization of the $Special$
$Cohomology$ $Comparison$ $Theorem$.

This natural construction may be part of providing a more conceptual
interpretation for the Hochschild cohomology of a presheaf of
algebras together with its Gerstenhaber bracket, that of the Lie
algebra of an algebraic group ($i.e$ a group valued functor). In the
case of a single algebra over a field B. Keller, in [6], identifies
$\mathrm{H}^{\bullet}(A,A)$ with the Lie algebra of an algebraic
group by  regarding $\mathrm{H}^{i}(A,A)$ as a homomorphism group
$Mor_{\mathcal{D}({A}-{\mathrm{bimod}})}
({A}^{\bullet},{A}^{\bullet}{[i]})$ in the derived category
$\mathcal{D}({A}-{\mathrm{bimod}})$ and then establishing a
bijection between the latter groups and certain infinitesimal
deformations of $A$ which have a natural Lie bracket. Since the
Gerstenhaber bracket exists on the Hochschild cohomology of
presheaves of algebras presumably a similar interpretation exists
for this situation too. To adapt Keller's technique to this case one
needs to find the ``correct'' derived category that allows the
interpretation of the relative Hochschild cohomology as Hom groups.

$\mathbf{Note:}$ This paper was inspired by [7] and it would have
not been possible without the support of Samuel D. Schack.

\section{Resolutions, adjoint functors and the functor !}
$\;$Let $k$ be a commutative ring and $\mathcal{C}$ a poset viewed
as a category in the usual way: for each $i\leq j$ there is a unique
map $\varphi^{ij}:i\longrightarrow j$. When $A$ is a $k$-algebra and
$M$ any $A$ bimodule we assume $M$ to be symmetric over $k$. ($i.e.$
$ax=xa$ for all $x\in M$ and $a\in k$.) A presheaf of $k$-algebras
over $\mathcal{C}$ is a functor
$\mathbb{A}:\mathcal{C}^{op}\longrightarrow k$-${\mathbf{alg}}$. We
will denote $\mathbb{A}(i)$ by $\mathbb{A}^i$. A presheaf as above
is a special case of functor defined from a small category to the
category of $k$ algebras. In [1] these functors are called a
``diagrams".

The category $\mathbb{A}$-bimod is the category whose objects are
$\mathbb{A}$-bimodules and the maps are maps of bimodules. An
$\mathbb{A}$-bimodule $\mathbb{M}$ is a presheaf of abelian groups
such that $\mathbb{M}^{i}$ is an $\mathbb{A}^{i}$-bimodule
$(\forall) i\in\mathcal{C}$ and for all $i\leq j$ the map
$T_{\mathbb{M}}^{ij}:\mathbb{M}^{j}\longrightarrow\mathbb{M}^{i}$ is
an $\mathbb{A}^{j}$-bimodule map. An $\mathbb{A}$-bimodule map
$\eta:\mathbb{M}\longrightarrow\mathbb{N}$ is a natural
transformation in which $\eta^{i}$ is an $\mathbb{A}^{i}$-bimodule
map $(\forall) i\in\mathcal{C}$.

In defining Yoneda cohomology of the category $\mathbb{A}$-bimod
`allowable' maps play a vital role. A map
$\eta:\mathbb{M}\longrightarrow\mathbb{N}$ is $\mathbf{allowable}$
if $(\forall) i\in\mathcal{C}$ the map
$\eta^{i}:\mathbb{M}^{i}\longrightarrow\mathbb{N}^{i}$ admits a
$k$-bimodule splitting map
$k^{i}:\mathbb{N}^{i}\longrightarrow\mathbb{M}^{i}$ satisfying
$\eta^{i}k^{i}\eta^{i}=\eta^{i}$. We do not require the splitting
maps $k^{i}$ to be natural. An $\mathbb{A}$-bimodule $\mathbb{P}$ is
a $\mathbf{relative}$ $\mathbf{ projective}$ if for every allowable
epimorphism $\mathbb{M}\longrightarrow\mathbb{N}$ the induced map
$Hom_{\mathbb{A}-\mathbb{A}}(\mathbb{P},\mathbb{M})\longrightarrow
Hom_{\mathbb{A}-\mathbb{A}}(\mathbb{P},\mathbb{N})$ is an
epimorphism of sets.

A $\mathbf{relative}$ $\mathbf{projective}$ $\mathbf{allowable}$
$\mathbf{resolution}$ of an $\mathbb{A}$-bimodule $\mathbb{M}$ is an
exact sequence
$\cdots\longrightarrow\mathbb{P}_{n}\cdots\longrightarrow\mathbb{P}_{1}
\longrightarrow\mathbb{P}_{0}\longrightarrow\mathbb{M}\longrightarrow0$
in which all $\mathbb{P}_n$ are relative projective $\mathbb{A}$-
bimodules and all maps are allowable. The category
$\mathbb{A}$-bimod has enough relative projective bimodules and each
bimodule has a relative projective allowable resolution. Moreover,
there is a functorial way of getting this type of resolutions. The
construction of such a resolution is due to M. Gerstenhaber and S.
D. Schack (see [1]) and is based on two facts: First, the
`forgetful' functor
$\mathbb{A}$-bimod$\longrightarrow$$\mathbb{K}$-bimod has a left
adjoint $\mathbb{A}\otimes_{\mathbb{K}} -
\otimes_{\mathbb{K}}\mathbb{A}$, where $\mathbb{K}$ is the constant
presheaf $\mathbb{K}^i=k$,  $ (\forall) i\in\mathcal{C}$. For each
$\mathbb{N}\in\mathbb{A}$-bimod we set
$(\mathbb{A}\otimes_{\mathbb{K}}\mathbb{N}\otimes_{\mathbb{K}}\mathbb{A})^i=
\mathbb{A}^i\otimes_k\mathbb{N}^i\otimes_k\mathbb{A}^i$ and the map
$\mathbb{A}^j\otimes_k\mathbb{N}^j\otimes_k\mathbb{A}^j\longrightarrow
\mathbb{A}^i\otimes_k\mathbb{N}^i\otimes_k\mathbb{A}^i$
corresponding to $i\leq j$ in $\mathcal{C}$ is just
$\varphi^{ij}\otimes T_{\mathbb{N}}^{ij}\otimes\varphi^{ij}$.

The corresponding categorical bar resolution, of [2], of an
$\mathbb{A}$-bimodule $\mathbb{N}$, denoted
$\mathcal{B}_\bullet(\mathbb{N})$, is allowable and since
$\mathcal{B}_{q}(\mathbb{N})=\mathbb{A}\otimes_{\mathbb{K}}\mathcal{B}_{q-1}(\mathbb{N})
\otimes_{\mathbb{K}}\mathbb{A}$ we have that
$\mathcal{B}_{q}(\mathbb{N})^i$ is a relative projective
$\mathbb{A}^i$-bimodule $(\forall) i\in \mathcal{C}$. In addition,
the resolution has a functorial contracting homotopy
$x_q:\mathcal{B}_{q}(\mathbb{N})\longrightarrow\mathcal{B}_{q+1}(\mathbb{N})$,
$x_q(a)=1\otimes a\otimes 1$.

Second, observe that $(\forall) i\in\mathcal{C}$ the functor
$(i)^{*}:\mathbb{A}$-bimod $\longrightarrow\mathbb{A}^{i}$-bimod
defined by $(i)^{*}\mathbb{M}=\mathbb{M}^{i}$ admits a left adjoint
$(i)_{!}:\mathbb{A}^{i}$-bimod$\longrightarrow\mathbb{A}$-bimod,
where $(i_{!}M)^{h}=\mathbb{A}^{h}\otimes_{\mathbb{A}^{i}}M
\otimes_{\mathbb{A}^{i}}\mathbb{A}^{h}$ if $h\leq i $ and
$(i_{!}M)^{h}=0$ otherwise. If $h\leq j\leq i$ the map
$(i_{!}M)^{j}\longrightarrow(i_{!}M)^{h}$ is
$\varphi^{hj}\otimes{Id_M}\otimes\varphi^{hj}$ and it is zero
otherwise.

Combining the functors $(i)^{*}$ we obtain a single exact functor
$\mathcal{R}:\mathbb{A}$-bimod$\longrightarrow\prod_{i\in\mathcal{C}}(\mathbb{A}^{i}$-$
\mathrm{bimod})$, defined on objects by
$\mathcal{R}\mathbb{M}=\prod_{i\in \mathcal{C}}{\mathbb{M}^{i}}$ and
whose left adjoint $\mathcal{L}$ is defined on objects by
$\mathcal{L}{M_i}=\coprod_{i\in\mathcal{C}}(i)_{!}M_i$. Applying
again the categorical bar resolution of [2] we obtain an allowable
resolution with a functorial contracting homotopy. We denote this
resolution by $\mathcal{S_{\bullet}}$. Thus
$\mathcal{S}_p=(\mathcal{LR})^{p+1}=\mathcal{LRS}_{p-1}$ and the
boundary maps $d_p:\mathcal{S}_{p+1}\longrightarrow\mathcal{S}_{p}$
are defined inductively by
$d_p=\varepsilon_{\mathcal{S}_p}-\mathcal{LR}d_{p-1}$, where
$d_{-1}=\varepsilon$ is the counit of the adjunction. The
contracting homotopy is the unit
$\eta_{\mathcal{RS}_p}:\mathcal{RS}_p\longrightarrow\mathcal{RS}_{p+1}$.

Here is a more direct description of $\mathcal{S_{\bullet}}$. Let
[p] be the linearly ordered set $\{0<1<\dots<p\}$. A covariant
functor $\sigma:[p]\rightarrow\mathcal{C}$ is called a $p$-simplex.
Thus $p$-simplices are objects of the functor category
$\mathcal{C}^{[p]}$. The domain of $\sigma$ is defined as
$\sigma(0)$ and is denoted by $d\sigma$. Similarly, the codomain of
$\sigma$ is defined as $\sigma(p)$ and is denoted by $c\sigma$. For
each $p$-simplex $\sigma$ we write $\sigma=(\sigma^{01},
\dots,\sigma^{p-1,p})$ and define
$$\sigma_r=\begin{cases}
(\sigma^{12},\dots,\sigma^{p-1,p}) $\;$ $\;$$\;$$\;$ $\;$$\;$$\;$ $\;$$\;$$\;$$\;$ $\;$$\;$ $\;$$\;$$\;$ $\;$$\;$$\;$ $\;$$\;$$\;$ $\;$$\;$\mathrm{if} $\;$ $\;$$\;$$\;$ $\;$$\;$$r=0$\\
(\sigma^{01},\dots,\sigma^{r-1,r+1},\dots,\sigma^{p-1,p}) $\;$ $\;$$\;$$\;$ \mathrm{if}$\;$ $\;$$\;$ 0<r<p\\
(\sigma^{01},\dots,\sigma^{p-2,p-1})$\;$ $\;$$\;$$\;$ $\;$$\;$$\;$ $\;$$\;$$\;$ $\;$$\;$$\;$ $\;$$\;$$\;$ $\;$$\;$$\;$ $\;$$\;$ \mathrm{if} $\;$ $\;$$\;$$\;$ $\;$$\;$$r=p$\\
\end{cases}$$

Note that $d\sigma_r=d\sigma=\sigma(0)$ if $r\neq0$ and
$d\sigma_0=\sigma(1)$. Similarly, $c\sigma_r=c\sigma=\sigma(p)$ if
$r\neq p$ and $c\sigma_p=\sigma(p-1)$. Also, note that $d\sigma\leq
d\sigma_r$ and $c\sigma_r\leq c\sigma$ and recall that the structure
maps defining presheaves  and bimodules are contravariant.

For $\mathbb{N}\in\mathbb{A}$-$\mathrm{bimod}$ and $p\geq0$ we have
$\mathcal{S}_p\mathbb{N}=\coprod_{\sigma\in\mathcal{C}^{[p]}}\mathcal{S}^{\sigma}_p\mathbb{N}$,
where
$\mathcal{S}^{\sigma}_p\mathbb{N}=(d\sigma)_{!}(\mathbb{A}^{d\sigma}\otimes_{\mathbb{A}
^{c\sigma}}\mathbb{N}^{c\sigma}\otimes_{\mathbb{A}^{c\sigma}}\mathbb{A}^{d\sigma})$
and  $\mathbb{A}^{d\sigma}$ is an $\mathbb{A}^{c\sigma}$-bimodule
via the map
$\varphi^{d\sigma,c\sigma}:\mathbb{A}^{c\sigma}\longrightarrow\mathbb{A}^{d\sigma}$.

For $p\geq0$, the boundary
$\partial:\mathcal{S}_p\mathbb{N}\longrightarrow\mathcal{S}_{p-1}\mathbb{N}$
is a sum $\partial=\sum_{r=0}^{p}(-1)^r\partial_r$ where the
restriction of $\partial_r$ to $\mathcal{S}_p^{\sigma}$ is denoted
$\partial_r^{\sigma}:\mathcal{S}_p^{\sigma}\mathbb{N}=(d\sigma)_{!}(\mathbb{A}^{d\sigma}
\otimes_{\mathbb{A}^{c\sigma}}\mathbb{N}^{c\sigma}\otimes_{\mathbb{A}^{c\sigma}}
\mathbb{A}^{d\sigma})\longrightarrow(d\sigma_r)_{!}
(\mathbb{A}^{d\sigma_r}
\otimes_{\mathbb{A}^{c\sigma_r}}\mathbb{N}^{c\sigma_r}\otimes_{\mathbb{A}^{c\sigma_r}}
\mathbb{A}^{d\sigma_r})=\mathcal{S}_{p-1}^{\sigma_r}\mathbb{N}$.

We obtain that for $h\leq d\sigma$ and
$a\otimes{n}\otimes{a'}\in(\mathcal{S}_p^\sigma\mathbb{N})^h=\mathbb{A}^{h}
\otimes_{\mathbb{A}^{c\sigma}}\mathbb{N}^{c\sigma}
\otimes_{\mathbb{A}^{c\sigma}}\mathbb{A}^h$,
$\partial_r^{\sigma}(a\otimes n\otimes a')=a\otimes
T_{\mathbb{N}}^{c\sigma_r,c\sigma}(n)\otimes
a'\in(\mathcal{S}_{p-1}^{\sigma_r}\mathbb{N})^h$. Here
$T_{\mathbb{N}}^{c\sigma_r,c\sigma}$ is the structure map of the
bimodule $\mathbb{N}$ corresponding to $c\sigma_r\leq c\sigma$. In
particular, when $r=0$ we get $\partial_0^{\sigma}(a\otimes n\otimes
a')=a\otimes n\otimes a'$, and when $r=p$ we get
$\partial_p^{\sigma}(a\otimes n\otimes a')= a\otimes
T_{\mathbb{N}}^{c\sigma_p,c\sigma}(n)\otimes a'$.

The augmentation map
$\varepsilon:\mathcal{S}_0\mathbb{N}=\coprod_{i\in\mathcal{C}}\mathcal{S}_0^i=
\coprod_{i\in\mathcal{C}}(i)_{!}(\mathbb{A}^{i}
\otimes_{\mathbb{A}^i}\mathbb{N}^i
\otimes_{\mathbb{A}^i}\mathbb{A}^i)\longrightarrow\mathbb{N}$ is
defined on the components $(i)_{!}(\mathbb{A}^{i}
\otimes_{\mathbb{A}^i}\mathbb{N}^i
\otimes_{\mathbb{A}^i}\mathbb{A}^i)$. For $h\leq i$,$\;$
$(i)_{!}(\mathbb{A}^{i} \otimes_{\mathbb{A}^i}\mathbb{N}^i
\otimes_{\mathbb{A}^i}\mathbb{A}^i)^h=\mathbb{A}^h\otimes_{\mathbb{A}^i}\mathbb{N}^i\otimes
_{\mathbb{A}^i}\mathbb{A}^h\longrightarrow\mathbb{N}^h$ is given by
$1\otimes n\otimes 1\longrightarrow T_{\mathbb{N}}^{hi}(n)$.

For $i\in\mathcal{C}$ the contracting homotopy
$\kappa_p^i:(\mathcal{S}_p\mathbb{N})^i\longrightarrow(\mathcal{S}_{p+1}\mathbb{N})^i$
is given componentwise by
$(\mathcal{S}_p^{\sigma}\mathbb{N})^i\longrightarrow(\mathcal{S}_{p+1}^{(i,\sigma)}\mathbb{N})^i=
identity$, where $(\mathcal{S}_{p+1}^{(i,\sigma)}\mathbb{N})=0$ if
$i\nleq d\sigma$. If $i\leq d\sigma$, then $(i,\sigma)$ is the
simplex $(i, d\sigma=\sigma(0),\ldots,\sigma(p))$.

In general the above resolution is not a relative projective
resolution, but it is when each $\mathbb{N}^i$ is a relative
projective $\mathbb{A}^i$-bimodule. Thus, to construct a relative
projective allowable resolution of an $\mathbb{A}$-bimodule
$\mathbb{N}$ we take the resolution
$\mathcal{B}_{\bullet}(\mathbb{N})\longrightarrow\mathbb{N}$,
determined by the forgetful functor and its left adjoint, and then
apply $\mathcal{S}_\bullet$ to it to obtain a double complex
$\mathcal{S}_{\bullet}\mathcal{B}_{\bullet}(\mathbb{N})$. Take now
the total complex of this double complex to get the desired
resolution.

The Hochschild cohomology of a presheaf $\mathbb{A}$ is defined to
be the relative Yoneda cohomology of $\mathbb{A}$.

That is, $$\mathbf{H}^{\bullet}(\mathbb{A},-)=\mathbf{Ext}_{\mathbb{A}-\mathbb{A}}^{\bullet}(\mathbb{A},-).$$
It plays a crucial role in the study of deformations of diagrams of
algebras and it has the same rich structure as the Hochschild
cohomology of a single algebra.

If $\mathbb{P}_\bullet\rightarrow\mathbb{A}$ is a relative
projective allowable resolution of $\mathbb{A}$ then
$\mathbf{H^\bullet}(\mathbb{A},-)$ is the homology of the complex
$\mathbf{Hom}_{\mathbb{A}-\mathbb{A}}(\mathbb{P}_\bullet,-)$.

To each presheaf of algebras $\mathbb{A}$ over $\mathcal{C}$ we can
associate a single algebra $\mathbb{A}!=$ row-finite
$\mathcal{C}\times\mathcal{C}$ matrices $(a_{ij})$ with
$a_{ij}\in\mathbb{A}^i$ if $i\leq j$ and $a_{ij}=0$ otherwise. The
addition is componentwise and the multiplication
$(a_{ij})(b_{ij})=(c_{ij})$ is induced by the matrix multiplication
with the understanding that, for $h\leq i\leq j$, the summand
$a_{hi}b_{ij}$ of $c_{hj}$ is regarded as
$a_{hi}b_{ij}=a_{hi}\varphi^{hi}(b_{ij})$. For our purpose it is
convenient to use the equivalent representation
$\mathbb{A!}=\prod_{i\in\mathcal{C}}\coprod_{i\leq
j}\mathbb{A}^i\varphi^{ij}$, as $k$-bimodule. Here $\varphi^{ij}$
serve to distinguish distinct copies of $\mathbb{A}^i$ from one
another. The general element of $\mathbb{A}^i\varphi^{ij}$ will be
denoted $a^{i}\varphi^{ij}$. The multiplication is defined
componentwise and subject to the rule:
$(a^h\varphi^{hi})(a^j\varphi^{jl})=a^h\varphi^{hi}(a^j)\varphi^{hl}$
if $i=j$ and $0$ otherwise.

Let $1_i$ the unit element of $\mathbb{A}^i$. Since
$(a^h\varphi^{hi})(1_i\varphi^{ij})=a^h\varphi^{hj}$ and
$(1_i\varphi^{hi})(a^i\varphi^{ij})=\varphi^{hi}(a^i)\varphi^{hj}$
we may abbreviate $1_i\varphi^{ij} $ to $\varphi^{ij}$. The maps
$\varphi^{ij}$ are then elements of $\mathbb{A}!$ and
$\varphi^{hi}\varphi^{ij}=\varphi^{hj}$;
$\varphi^{hi}\varphi^{jl}=0$ if $i\neq j$.

We define the functor
!$\;$:$\;$$\mathbb{A}$-bimod$\longrightarrow\mathbb{A}!$-bimod, such
that $\mathbb{A}\longrightarrow\mathbb{A}!$, by setting for any
$\mathbb{A}$-bimodule $\mathbb{M}$,
$\mathbb{M}!=\prod_{i\in\mathcal{C}}\coprod_{i\leq
j}\mathbb{M}^i\varphi^{ij}$ as a $k$-bimodule. The actions of
$\mathbb{A}!$ are defined by:
$$(a^h\varphi^{hi})(m^i\varphi^{ij})=a^hT_{\mathbb{M}}^{hi}(m^i)\varphi^{hj}$$
$$(m^h\varphi^{hi})(a^i\varphi^{ij})=m^h\varphi^{hi}(a^i)\varphi^{hj}$$
$$(a^h\varphi^{hi})(m^j\varphi^{jl})=0=(m^h\varphi^{hi})(a^j\varphi^{jl}),
\mathrm{if} i\neq j.$$ For $\eta\in
Hom_{\mathbb{A}-\mathbb{A}}(\mathbb{N}, \mathbb{M})$ define
$\eta!\in Hom_{\mathbb{A}!-\mathbb{A}!}(\mathbb{N}!, \mathbb{M}!)$
by $\eta!(n^i\varphi^{ij})=\eta^i(n^i)\varphi^{ij}$.

We will use the following proposition due to M. Gerstenhaber and S.
D. Schack.
\begin{proposition}
The functor $!:\mathbb{A}$-bimod$\longrightarrow\mathbb{A}!$-bimod
is exact, preserves allowability and is full and faithful.
\end{proposition}
\begin{proof}
see[2]

\end{proof}

In fact, M. Gerstenhaber and S. D. Schack  proved in [2] the
``Special Cohomology Comparison Theorem" (SCCT).

\begin{it} $\mathbf{Theorem}$$\mathbf{(SCCT).}$
Let $\mathcal{C}$ be an arbitrary poset and $\mathbb{A}$ a presheaf
over $\mathcal{C}$. The functor ! induces an isomorphism of relative
Yoneda cohomologies
$Ext^{\bullet}_{\mathbb{A}-\mathbb{A}}((-),(-))\cong
Ext^{\bullet}_{\mathbb{A}!-\mathbb{A}!}((-)!,(-)!).$ In particular,
we have an isomorphism of relative Hochschild cohomologies
$H^{\bullet}(\mathbb{A},(-))\cong H^{\bullet}(\mathbb{A}!,(-)!).$
\end{it}

An important consequence of this theorem is that the deformation
theories  of $\mathbb{A}$ and of $\mathbb{A!}$ are equivalent, if
the poset $\mathcal{C}$ has a terminator. Another is that
$H^{\bullet}(\mathbb{A}!,\mathbb{A}!)$ has a $G$-algebra structure.
These results can be found in their full generalization to diagrams
in [2], but we will not deal with them here. We will however
generalize the SCCT to derived categories and prove theorems 3.9 and
4.1. The SCCT  follows as a corollary from these theorems. To do
this we need to introduce a subcategory of the category of
$\mathbb{A!}$-bimod.
The image of ! lies in a full subcategory of $\mathbb{A}!$-bimod.
This is the category of $\mathbf{aligned}$ bimodules,
$\mathbb{A!}$-albimod. The main reason to consider it here is that
the functor ! has a left adjoint when restricted to
$!:\mathbb{A}$-bimod $\longrightarrow\mathbb{A!}$-albimod. Thus, for
every $\mathbb{A!}$ bimodule $X$ we set $X_{al}=\prod_{i\in
\mathcal{C}}\coprod_{i\leq j}\varphi^{ii}X\varphi^{jj}$ with the
obvious $\mathbb{A!}$ bimodule structure.

\begin{definition}
An $\mathbb{A!}$ bimodule $X$ is said to be $\mathbf{aligned}$ if
the $k$ linear map $X\longrightarrow \prod_{i\in
\mathcal{C}}\prod_{j\in \mathcal{C}}\varphi^{ii}X\varphi^{jj}$,
$x\longrightarrow<\varphi^{ii}x\varphi^{jj}>$ induces an
$\mathbb{A!}$ bimodule isomorphism $\alpha_{X} : X\longrightarrow
X_{al}=\prod_{i\in \mathcal{C}}\coprod_{i\leq
j}\varphi^{ii}X\varphi^{jj}.$
\end{definition}
For each $\mathbb{A}!$-bimodule map $f:X\longrightarrow Y$, the
restriction of $f$ to $\varphi^{ii}X\varphi^{jj}$ is a $k$ linear,
even a $\mathbb{A}^i$-$\mathbb{A}^j$-bimodule map
$f^{ij}:\varphi^{ii}X\varphi^{jj}\longrightarrow
\varphi^{ii}Y\varphi^{jj}$ since $f(\varphi^{ii}x\varphi^{jj})=
\varphi^{ii}f(x)\varphi^{jj}$ lies in $\varphi^{ii}Y\varphi^{jj}$.
Thus, $f$ gives rise to a family of $k$ linear maps
${f^{ij}:\varphi^{ii}X\varphi^{jj}\longrightarrow\varphi^{ii}Y\varphi^{jj}}$
such that $f^{hj}(a^h\varphi^{hi}\cdot x)=a^h\varphi^{hi}\cdot
f^{ij}(x)$ and $f^{iq}(x\cdot a^j\varphi^{jq})=f^{ij}(x)\cdot
a^j\varphi^{jq}$ $\forall x\in\varphi^{ii}X\varphi^{jj}$,
$a^h\in\mathbb{A}^h$, $a^j\in\mathbb{A}^j$ and $h\leq i\leq j\leq q$
in $\mathcal{C}$.

In fact these are exactly the conditions necessary on such a
collection of maps for $f_{al}=\prod_{i\in\mathcal{C}}\coprod_{i\leq
j}f^{ij} $ to be an $\mathbb{A}!$-bimodule map
$X_{al}\longrightarrow Y_{al}$. One can easily see that
$\mathbb{A}!$-albimod is abelian, and that both the inclusion
functor $inc:\mathbb{A}!$-albimod$\longrightarrow\mathbb{A}!$-bimod
and the alignment functor
$(-)_{al}:\mathbb{A}!$-bimod$\longrightarrow\mathbb{A}!$-albimod,
$X\longrightarrow X_{al}$ are exact and preserve allowability and
that
 $\alpha : Id_{\mathbb{A!}-albimod}\longrightarrow
(-)_{al}\circ inc $ is a natural isomorphism.

Now, we describe a method of producing relative projective allowable
resolutions of aligned bimodules of the form $\mathbb{N}!$ that we
will use to replace complexes of aligned bimodules with relative
projective ones in a suitable derived category. We begin with a
result due to M. Gerstenhaber and S. D. Schack.
\begin{proposition}
1. For each $i\leq j$ in $\mathcal{C}$ the restriction functor
$(-)^{ij}:\mathbb{A}!$-albimod$\longrightarrow\mathbb{A}^i$-mod-$\mathbb{A}^j$,
$X\longrightarrow\varphi^{ii}X\varphi^{jj}$ is exact and preserves
allowability.\\
2. The functor $(-)^{ij}$ has a left adjoint $L_{ij}$ that preserves
relative projectivity.
\end{proposition}
\begin{proof}
Part 1 is obvious. For 2, define
$L_{ij}:\mathbb{A}^i$-mod-$\mathbb{A}^j\longrightarrow\mathbb{A}!$-albimod
as follows: $L_{ij}(N)^{hl}=\begin{cases}
\mathbb{A}^h\otimes_{\mathbb{A}^i}|N|_{jl}$\;$ $\;$$\;$ $\;$
\mathrm{if}$\;$ $\;$ h\leq i\leq j\leq
l\\
 $0$$\;$ $\;$$\;$ $\;$$\;$ $\;$ \mathrm{otherwise}\\
 \end{cases}$

Here, $|N|_{jl}$ is $N$ viewed as a left $\mathbb{A}^i$-module and a
right $\mathbb{A}^l$-module via the map $\varphi^{jl}$. The actions
of $\mathbb{A}!$ are given by
$$a^r\varphi^{rh}(a^h\otimes n)=a^r\varphi^{rh}(a^h)\otimes n\in
L_{ij}(N)^{rl}$$ $$(a^h\otimes n)a^l\varphi^{lm}=a^h\otimes
n\varphi^{jl}(a^l)\in L_{ij}(N)^{hm},$$ for $a^h\otimes n\in
L_{ij}(N)^{hl}$ and $a^r\varphi^{rh},
a^l\varphi^{lm}\in\mathbb{A}!$.

One can check now that we have a natural isomorphism
$$Hom_{\mathbb{A}!-\mathrm{albimod}}(L_{ij}(N),X)\leftrightarrows
Hom_{\mathbb{A}^{i}-\mathbb{A}^j}(N,X^{ij})$$ for all
$X\in\mathbb{A}!$-albimod and
$N\in\mathbb{A}^{i}$-mod-$\mathbb{A}^j$.

If $P\in\mathbb{A}^i$-mod-$\mathbb{A}^j$ is relative projective then
the natural isomorphism\\
$Hom_{\mathbb{A}!-{\mathrm{albimod}}}(L_{ij}(P),-)\cong
Hom_{\mathbb{A}^{i}\mathbb{A}^{j}}(P,(-)^{ij})=Hom_{\mathbb{A}^{i}\mathbb{A}^{j}}(P,-)
\circ(-)^{ij}$ is a composite of functors which preserve allowable
epimorphisms, so $L_{ij}(P)$ is relative projective. ( for more
details see [2])
\end{proof}

Modeled on the M. Gerstenhaber - S. D. Schack resolution
$\mathcal{S}_\bullet$, C. B. Kullmann obtained in [3] an allowable
resolution
$\mathcal{T}_{\bullet}\mathbb{N}\longrightarrow\mathbb{N}!$ in
$\mathbb{A}!$-albimod as follows. For $p\geq 0$ let
$\mathcal{T}_{p}\mathbb{N}=\coprod_{\sigma\in\mathcal{C}^{[p]}}
\mathcal{T}_{p}^{\sigma}\mathbb{N}$, where the coproduct is taken in
$\mathbb{A}!$-albimod (  constructed by applying $(-)_{al}$ to that
in $\mathbb{A}$-bimod ), where
$\mathcal{T}_{p}^{\sigma}\mathbb{N}=L_{d\sigma,c\sigma}
(\mathbb{A}^{d\sigma}\otimes_{\mathbb{A}^{c\sigma}}\mathbb{N}^{c\sigma})$.

For $h\leq d\sigma\leq c\sigma\leq l$ we have a natural isomorphism
$(\mathcal{T}_p^{\sigma}\mathbb{N})^{hl}=\mathbb{A}^{h}\otimes_{\mathbb{A}^{d\sigma}}
|\mathbb{A}^{d\sigma}\otimes_{\mathbb{A}^{c\sigma}}\mathbb{N}^{c\sigma}|_{c\sigma,l}
\cong\mathbb{A}^{h}\otimes_{\mathbb{A}^{c\sigma}}|\mathbb{N}^{c\sigma}|_{c\sigma.l}$
 and we use this identification to define the differentials. If
$p\geq 1$ we define
$d:\mathcal{T}_{p}\mathbb{N}\longrightarrow\mathcal{T}_{p-1}\mathbb{N}$
as a sum $d=\sum_{r=0}^p(-1)^rd_r$, where each $d_r$ is determined
by its restriction to $\mathcal{T}_{p}^{\sigma}\mathbb{N}$ and for
$h\leq d\sigma\leq c\sigma\leq l$ and $a\otimes
n\in(\mathcal{T}_p^{\sigma}\mathbb{N})^{hl}=\mathbb{A}^{h}\otimes_{\mathbb{A}^{c\sigma}}
|N^{c\sigma}|_{c\sigma,l}$, we have $d_r^\sigma(a\otimes n)=a\otimes
T_{\mathbb{N}}^{c\sigma_r,c\sigma}(n)\in\mathbb{A}^{h}\otimes_{\mathbb{A}^{c\sigma_r}}
|\mathbb{N}^{c\sigma_r}|_{c\sigma_r,l}=(\mathcal{T}_{p-1}^{\sigma}\mathbb{N})^{hl}$.
If $p=0$ the map $\varepsilon^{\mathcal{T}}:
\mathcal{T}_0\mathbb{N}=\coprod_{i}\mathcal{T}_0^{(i)}\mathbb{N}\longrightarrow\mathbb{N}!$
is determined by
$(\mathcal{T}_0^{(i)}\mathbb{N})^{hl}=\mathbb{A}^h\otimes_{\mathbb{A}^i}|\mathbb{N}^i|_{il}
\longrightarrow\mathbb{N}!^{hl}=\mathbb{N}^h\varphi^{hl}$, $a\otimes
n\longrightarrow aT_{\mathbb{N}}^{hi}(n)\varphi^{hl}$, for $h\leq
i\leq l$.

It is easy to check that
$\mathcal{T}_{\bullet}\mathbb{N}\longrightarrow\mathbb{N}!$ is a
chain complex and it is in fact an allowable resolution since it has
a contracting homotopy induced by
$\kappa_p:(\mathcal{T}_p^{\sigma}\mathbb{N})^{hl}\longrightarrow(\mathcal{T}_{p+1}^{(h,\sigma)}
\mathbb{N})^{hl}$, $\kappa_p=identity$, where $(h,\sigma)$ is the
simplex $(h,\sigma(0),\ldots,\sigma(p))$ if $ h\leq \sigma(0)$ and
$\mathcal{T}_{p+1}^{(h,\sigma)}\mathbb{N}=0$ if $h\nleq d\sigma$.

In general $\mathcal{T}_p\mathbb{N}$ is not a relative projective
aligned $\mathbb{A}!$-bimodule, but it is when each $\mathbb{N}^i$
is relative projective $\mathbb{A}^i$-bimodule. To obtain  a
relative projective aligned resolution, for each
$\mathbb{A}$-bimodule $\mathbb{N}!$, take the relative projective
resolution $\mathcal{B}_{\bullet}(\mathbb{N})$, apply ! and then
$\mathcal{T}_{\bullet}$ to obtain a double complex. Now, take the
total complex to obtain the desired resolution.

We conclude this section with a result which connects
$\mathcal{T}_{\bullet}$ and $\mathcal{S}_{\bullet}$ via a left
adjoint  of !. Because the only source for the following theorem is
[3]  the proof is included in the Appendix A.
\begin{theorem}
1. The functor
$!:\mathbb{A}$-bimod$\longrightarrow\mathbb{A}!$-albimod admits a
left adjoint
$\raisebox{.45ex}{\textup{\textbf{!`}}}:\mathbb{A}!$-albimod$\longrightarrow\mathbb{A}$-bimod.\\
2. There are natural isomorphisms
$\mathcal{T}_p\mathbb{N}\raisebox{.45ex}{\textup{\textbf{!`}}}\longrightarrow\mathcal{S}_p\mathbb{N}$
which induce a natural isomorphism of complexes
$(\mathcal{T}_{\bullet}\mathbb{N}\longrightarrow\mathbb{N}!)\raisebox{.45ex}{\textup{\textbf{!`}}}$
 and
$(\mathcal{S}_{\bullet}\mathbb{N}\longrightarrow\mathbb{N})$.
\end{theorem}
\begin{proof}
see  Appendix A.

\end{proof}

\section{Derived categories and Hochschild cohomology }
Let $Kom^{-}(\mathbb{A}-\mathrm{bimod})$ the category of bounded to
the right complexes of $\mathbb{A}$-bimodules
$$\mathbb{M_\bullet}:=\xymatrix{\cdots
\mathbb{M}_n\ar[r]&\cdots&
\cdots\ar[r]&\mathbb{M}_1\ar[r]& \mathbb{M}_0\ar[r]&0}$$

A map between two complexes $\mathbb{M_\bullet}$ and
$\mathbb{N_\bullet}$ is a collection of maps
$f=(f_i):\mathbb{M}_i\rightarrow\mathbb{N}_i$, one for each positive
integer $i$, which commute with the differentials of
$\mathbb{M_\bullet}$ and $\mathbb{N_\bullet}$. We do not require the
maps defining the complexes or the maps between complexes to be
$k$-split.

Similarly, we define $Kom^{-}(\mathbb{A}!-\mathrm{albimod})$ and
$Kom^{-}(\mathbb{A}!-\mathrm{bimod})$.

\begin{definition}
1) A map $f:\xymatrix{M_{\bullet}\ar[r]&N_{\bullet}}$ in
$Kom^{-}(\mathbb{A}!-\mathrm{bimod})$ (or
$Kom^{-}(\mathbb{A}!-\mathrm{albimod})$) is a $\mathbf{relative}$
$\mathbf{quasi}$-$\mathbf{isomorphism}$ if its cone
$\mathcal{C}(f)_\bullet$ is contractible when considered as a
complex of $k$-bimodules.\\2) A map
$f:\xymatrix{\mathbb{M_{\bullet}}\ar[r]&\mathbb{N}_{\bullet}}$ in
$Kom^{-}(\mathbb{A}-\mathrm{bimod})$ is a $\mathbf{relative}$
$\mathbf{quasi}$-$\mathbf{isomorphism}$ if the maps of complexes
$f^i:\xymatrix{\mathbb{M}_{\bullet}^i\ar[r]&\mathbb{N}_{\bullet}^i}$
have  contractible cones, when considered as complexes of
$k$-bimodules, for all $i\in\mathcal{C}$.
\end{definition}
The word ``$\mathbf{relative}$" in the above definition is used as a
reminder to the reader that Yoneda and Hochschild cohomologies are
relative theories,  since $k$ is a commutative ring that is not
necessarily a field. It is the relative Yoneda  groups that we want
to view as homomorphism groups in a suitable category.
\begin{proposition}
Let $A$ be any $k$-algebra and $f:M_{\bullet}\longrightarrow
N_{\bullet}$ a map of complexes of $A$ bimodules in
$Kom^{-}(A-bimod)$. Then, $f$ is a relative quasi-isomorphism if and
only if there exists $\gamma:N_{\bullet}\longrightarrow M_{\bullet}$
a map of complexes of $k$-bimodules such that $f\gamma\sim
id_{N_{\bullet}}$  and $\gamma f\sim id_{M_{\bullet}}$ in
$Kom^{-}(k-bimod)$, where `$\sim$' stands for homotopy equivalence.
\end{proposition}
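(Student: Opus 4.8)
The plan is to reduce the statement to a standard fact about complexes over the single ground ring $k$ and then to prove that fact by a mapping cone computation. For a single algebra $A$ (a diagram over the one object category) the definition of a relative quasi-isomorphism specializes to saying that $f$ is a relative quasi-isomorphism precisely when the mapping cone $C(f)$, viewed in $Kom^{-}(k-\mathbf{mod})$, is contractible, i.e. $\mathrm{id}_{C(f)}$ is null homotopic. Thus the proposition is exactly the classical lemma that a chain map of $k$-complexes is a homotopy equivalence if and only if its cone is contractible. I would first fix the cone $C(f)_n=M_{n-1}\oplus N_n$ with differential $(m,n)\mapsto(-d_Mm,\,fm+d_Nn)$, and note that $\partial^2=0$ uses precisely that $f$ is a chain map; here $d_M,d_N$ denote the differentials of $M_\bullet,N_\bullet$.

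For the direction ``relative quasi-isomorphism $\Rightarrow$ homotopy equivalence'' I would take a $k$-linear contracting homotopy $s$ of degree $+1$ with $\partial s+s\partial=\mathrm{id}$ and write it as a block matrix $s=\left(\begin{smallmatrix}p&\gamma\\ r&t\end{smallmatrix}\right)$ with respect to $C(f)_\bullet=M_{\bullet-1}\oplus N_\bullet$. Expanding $\partial s+s\partial=\mathrm{id}$ into its four block equations, the $(1,2)$ entry reads $d_M\gamma=\gamma d_N$, so $\gamma\colon N_\bullet\to M_\bullet$ is a chain map of $k$-modules; the $(2,2)$ entry reads $\mathrm{id}_N-f\gamma=d_Nt+td_N$, so $f\gamma\sim\mathrm{id}_N$; and the $(1,1)$ entry reads $\gamma f-\mathrm{id}_M=d_Mp+pd_M$, so $\gamma f\sim\mathrm{id}_M$. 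Reading off $\gamma$ and the two homotopies gives the desired $k$-split homotopy inverse, so this direction is immediate.

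For the converse I would start from a $k$-chain map $\gamma$ together with homotopies $p$ (with $\gamma f-\mathrm{id}_M=d_Mp+pd_M$) and $t$ (with $\mathrm{id}_N-f\gamma=d_Nt+td_N$) and attempt to assemble the contraction $s=\left(\begin{smallmatrix}p&\gamma\\ r&t\end{smallmatrix}\right)$. The $(1,1)$, $(1,2)$ and $(2,2)$ blocks already hold by hypothesis, so the only remaining requirement is the $(2,1)$ equation $d_Nr-rd_M=-(fp+tf)$. A short calculation using the two homotopy identities together with $d_Nf=fd_M$ shows that $u:=fp+tf$ satisfies $d_Nu+ud_M=0$; that is, $u$ is a degree one cocycle of the complex $\mathrm{Hom}^\bullet_k(M_\bullet,N_\bullet)$, and solving the $(2,1)$ equation for $r$ is the same as exhibiting $u$ as a coboundary.

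This last point is where I expect the real work to lie: for arbitrarily chosen homotopies $p$ and $t$ the cocycle $u$ need not be a coboundary, so one cannot simply write down $r$. I would overcome this by the standard device of upgrading the homotopy data to a compatible system, replacing $t$, and if necessary $\gamma$, by boundary corrections so that $fp+tf=0$, after which $r=0$ satisfies the last block equation and $s$ becomes a genuine contraction. The only remaining chore is to verify that these corrections preserve the homotopy relations and keep $\gamma$ a chain map, which is routine once the compatibility has been arranged. Throughout, only the $k$-linearity of the maps is used: the homotopy inverse $\gamma$ and all homotopies are built as $k$-module maps and never as $A$-module maps, which is exactly the relative nature recorded in the definition of a relative quasi-isomorphism.
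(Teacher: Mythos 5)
The paper gives no argument of its own for this proposition; it simply cites [7, Proposition 3.2], so there is no internal proof to compare against. Your reduction of the statement to the classical lemma that a chain map of $k$-complexes is a homotopy equivalence if and only if its cone is contractible is exactly right, and your forward direction is complete and correct: with the cone differential $\partial(m,n)=(-d_Mm,\,fm+d_Nn)$, the four block equations of $\partial s+s\partial=\mathrm{id}$ do yield that $\gamma$ is a chain map and that $p$ and $t$ are the two required homotopies, with the signs as you state them.

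The converse, however, has a genuine gap at precisely the point you flag as ``where the real work lies.'' You correctly compute that $u=fp+tf$ is a degree-one cocycle of $\mathrm{Hom}^{\bullet}_k(M_{\bullet},N_{\bullet})$ and that filling the $(2,1)$ block amounts to exhibiting $u$ as a coboundary, but you then only assert that ``the standard device of upgrading the homotopy data'' arranges $fp+tf=0$. Two problems: first, one cannot in general make $fp+tf$ literally zero by boundary corrections of $t$ (a correction $c$ of degree one changes $u$ by $cf$, so you can only hope to make $u$ a coboundary, which is in fact all you need since $r$ has degree two); second, and more seriously, the existence of a suitable correction is not routine bookkeeping --- it is the entire content of this direction, and it uses the hypothesis that $f$ is a homotopy equivalence in an essential way. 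A complete version of your argument: since $\gamma$ is a homotopy inverse of $f$, precomposition $(-)\circ f:\mathrm{Hom}^{\bullet}_k(N_{\bullet},N_{\bullet})\to\mathrm{Hom}^{\bullet}_k(M_{\bullet},M_{\bullet}... )$ --- more precisely $\mathrm{Hom}^{\bullet}_k(N_{\bullet},N_{\bullet})\to\mathrm{Hom}^{\bullet}_k(M_{\bullet},N_{\bullet})$ --- is a homotopy equivalence, hence surjective on $H^{1}$; choose a degree-one cocycle $c$ with $[cf]=-[u]$, replace $t$ by $t+c$ (which preserves the $(2,2)$ identity because $d_Nc+cd_N=0$), and then solve the $(2,1)$ equation for $r$. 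Alternatively the whole converse is faster by a triangulated-category argument: apply $\mathrm{Hom}_{\mathcal{K}^{-}(k-\mathbf{mod})}(P,-)$ to the strict triangle $M_{\bullet}\to N_{\bullet}\to \mathcal{C}(f)_{\bullet}\to M_{\bullet}[1]$, use that $f_{*}$ is an isomorphism for every $P$ because $f$ is invertible in the homotopy category, conclude $\mathrm{Hom}_{\mathcal{K}^{-}(k-\mathbf{mod})}(\mathcal{C}(f)_{\bullet},\mathcal{C}(f)_{\bullet})=0$, and take the identity map. Either way, the missing step must actually be supplied; as written, the hard half of the equivalence is asserted rather than proved.
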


\begin{proof}
$'\Rightarrow'$

Assume that $f$ is a relative quasi-isomorphism. Thus
$\mathcal{C}{(f)}_{\bullet }$ is contractible when regarded as a
complex of $k$-bimodules, so there exist
$s=(s_n):\mathcal{C}{(f)}_{\bullet}^{n-1}\longrightarrow\mathcal{C}{(f)}_{\bullet}^{n}$
maps of $k$-bimodules such that
$sd_{\mathcal{C}{(f)}_{\bullet}}+d_{\mathcal{C}{(f)}_{\bullet}}s=id$.
We may assume that
\begin{center}$s=\left(%
\begin{array}{cc}
  \alpha & \gamma\\
  \beta& \delta \\
\end{array}%
\right)\mathrm{and}$\end{center} \begin{center}$
 d_{\mathcal{C}{(f)}_{\bullet}}=\left(%
\begin{array}{cc}
  -d_{M_\bullet} & 0\\
  f & d_{N_\bullet} \\
\end{array}%
\right),$\end{center} where $\alpha:M_{\bullet -1}\longrightarrow M_{\bullet}$,
$\beta:M_{\bullet -1}\longrightarrow N_{\bullet +1}$,
$\gamma:N_{\bullet}\longrightarrow M _{\bullet}$ and
$\delta:N_{\bullet}\longrightarrow N_{\bullet +1}$ are $k$ linear
maps. Since
$sd_{\mathcal{C}{(f)}_{\bullet}}+d_{\mathcal{C}{(f)}_{\bullet}}s=id$,
we obtain  $-\alpha d_{M_{\bullet}}+\gamma
f-d_{M_{\bullet}}\alpha=id_{M_{\bullet}}$, $-\beta
d_{M_{\bullet}}+\delta f+f\alpha+d_{N_{\bullet}}\beta=0$, $\gamma
d_{N_\bullet}-d_{M_\bullet}\gamma=0 $ and $\delta
d_{N_\bullet}+f\gamma+d_{N_\bullet}\delta=id_{N_\bullet}$.

Thus, $\gamma$ is a map of complexes of $k$-bimodules and since
$\delta
d_{N_{\bullet}}+d_{N_{\bullet}}\delta=id_{N_{\bullet}}-f\gamma$ and
$\alpha d_{M_{\bullet}}+d_{M_{\bullet}}\alpha=\gamma
f-id_{M_{\bullet}}$, we have $f\gamma \sim id_{N_{\bullet}}$ and
$\gamma f\sim id_{M_{\bullet}}$ in $Kom^{-}{(k-bimod)}$.

$'\Leftarrow'$

Assume $f\gamma \sim id_{N_{\bullet}}$ and $\gamma f\sim
id_{M_{\bullet}}$ in $Kom^{-}{(k-bimod)}$, so there are maps
$s^{N_{\bullet}}$ and $s^{M_{\bullet}}$ such that $f\gamma -
id_{N_{\bullet}}=s^{N_{\bullet}}d_{N_{\bullet}}+d_{N_{\bullet}}s^{N_{\bullet}}$
and $\gamma f-
id_{M_{\bullet}}=s^{M_{\bullet}}d_{M_{\bullet}}+d_{M_{\bullet}}s^{M_{\bullet}}$.

The map $s^{\mathcal{C}{(f)}}_{\bullet}=\left(%
\begin{array}{cc}
  s^{M_{\bullet}}+\gamma(s^{N_{\bullet}}f-fs^{M_{\bullet}}) & \gamma \\
  s^{N_{\bullet}}(fs^{M_{\bullet}}-s^{N_{\bullet}}f) & -s^{N_{\bullet}} \\
\end{array}%
\right) $ is a homotopy. Indeed,
$$s^{\mathcal{C}{(f)}}_{\bullet}d_{\mathcal{C}{(f)}_{\bullet}}+
d_{\mathcal{C}{(f)}_{\bullet}}s^{\mathcal{C}{(f)}}_{\bullet}=$$
$$=\left(%
\begin{array}{cc}
  id_{M_{\bullet}}-\gamma s^{N_{\bullet}}fd_{M_{\bullet}}+\gamma fs^{M_{\bullet}}d_{M_{\bullet}}-d_{M_{\bullet}}\gamma s^{N_{\bullet}}f+d_{M_{\bullet}}\gamma fs^{M_{\bullet}}& 0 \\
  s^{N_{\bullet}}s^{N_{\bullet}}fd_{M_{\bullet}}+f\gamma s^{N_{\bullet}}f- d_{N_{\bullet}}s^{N_{\bullet}}s^{N_{\bullet}}f-s^{N_{\bullet}}f\gamma f& id_{N_{\bullet}} \\
\end{array}%
\right) $$

$\;$

= $\left(%
\begin{array}{cc}
   id_{M_{\bullet}} & 0 \\
  0 &  id_{N_{\bullet}} \\
\end{array}%
\right)=$ $id_{\mathcal{C}{(f)}_{\bullet}}$. Thus
$\mathcal{C}{(f)}_{\bullet}$ is contractible in $Kom^{-}(k-bimod).$
\end{proof}

Proposition 3.2. allows us to conclude that if any two of $f, g$ or
$fg$ are relative quasi-isomorphisms then so is the third. We prove
now the following

\begin{proposition} The class of relative quasi-isomorphisms in the homotopic
category $\mathcal{K}^{-}(\mathbb{A}-\mathrm{bimod})$ is localizing.

\end{proposition}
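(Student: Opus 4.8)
The plan is to verify that the class $S$ of relative quasi-isomorphisms satisfies the axioms of a localizing class (a multiplicative system compatible with the triangulation) in the triangulated category $\mathcal{K}^{-}(\mathbb{A}-\mathbf{mod})$. The organizing observation is that, by the Definition together with the preceding proposition, a map $f:\mathbb{M}_{\bullet}\rightarrow\mathbb{N}_{\bullet}$ lies in $S$ precisely when its mapping cone $C(f)$ lies in the full subcategory $\mathcal{N}\subseteq\mathcal{K}^{-}(\mathbb{A}-\mathbf{mod})$ of \emph{relatively contractible} complexes, i.e.\ those complexes $\mathbb{X}_{\bullet}$ for which each $\mathbb{X}_{\bullet}^{i}$ is contractible as a complex of $k$-modules; indeed, $C(f)$ is computed objectwise, so $C(f)^{i}=C(f^{i})$. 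Thus the whole statement can be deduced from the single fact that $\mathcal{N}$ is a thick triangulated subcategory: once this is known, the standard theory of Verdier localization shows that the class of maps with cone in $\mathcal{N}$ is localizing. I will therefore organize the proof around establishing the axioms through $\mathcal{N}$, indicating where a direct verification is possible.

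First I would dispose of the easy requirements. Closure of $S$ under composition and the presence of identities are immediate from the corollary noted above (the two-out-of-three property), since $\mathrm{id}$ has zero cone and the cone of a composite is controlled, via the octahedral axiom, by the cones of the factors. That $\mathcal{N}$ is closed under the shift functor is clear, since shifting a $k$-contractible complex yields a $k$-contractible complex. The key closure property is under triangles: if $\mathbb{X}_{\bullet}\rightarrow\mathbb{Y}_{\bullet}\rightarrow\mathbb{Z}_{\bullet}\rightarrow$ is a distinguished triangle with two vertices in $\mathcal{N}$, then evaluating at each $i\in\mathcal{C}$ gives a distinguished triangle of complexes of $k$-modules in which two vertices are contractible (i.e.\ zero in the homotopy category $\mathcal{K}^{-}(k-\mathbf{mod})$); hence the third vertex is zero there as well, so it lies in $\mathcal{N}$. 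Closure under direct summands (thickness) follows the same objectwise pattern. This yields that $\mathcal{N}$ is a thick triangulated subcategory.

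The substantive axioms are the Ore (square-completion) conditions and the cancellation property. For the Ore condition one is given $f:\mathbb{X}_{\bullet}\rightarrow\mathbb{Y}_{\bullet}$ and $s:\mathbb{X}_{\bullet}\rightarrow\mathbb{X}'_{\bullet}$ with $s\in S$, and must produce $g$ and $t\in S$ making the square commute in $\mathcal{K}^{-}(\mathbb{A}-\mathbf{mod})$. I would build the completion as a homotopy pushout, forming $t$ from the mapping cone of $s$; because cones are computed objectwise and within $\mathbb{A}$-modules, the cone of $t$ is isomorphic to the cone of $s$ and hence lies in $\mathcal{N}$, so $t\in S$. The dual square is handled symmetrically by a homotopy pullback. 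Cancellation is then obtained from the triangulated structure: if $sf\sim sg$ with $s\in S$, the difference $f-g$ factors through the cone of $s$, an object of $\mathcal{N}$, and one completes to a $t\in S$ annihilating it.

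The main obstacle I anticipate is precisely the Ore condition, and the reason is structural rather than computational. The homotopy inverse $\gamma$ furnished by the preceding proposition is only a map of complexes of $k$-modules, not of $\mathbb{A}$-modules; consequently a relative quasi-isomorphism is genuinely not invertible in $\mathcal{K}^{-}(\mathbb{A}-\mathbf{mod})$, and the square cannot be completed by naively inverting $s$. The care required is to carry out the mapping-cone and mapping-cylinder constructions entirely inside $\mathbb{A}$-$\mathbf{mod}$, so that the completing morphism is a bona fide $\mathbb{A}$-module map, and only afterward to invoke the objectwise $k$-contractibility to certify that its cone lies in $\mathcal{N}$. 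Once this is arranged, the compatibility of $S$ with the triangulation and the two remaining axioms follow from the thickness of $\mathcal{N}$ established above.
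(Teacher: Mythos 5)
Your argument is correct and is essentially the route the paper takes: the paper itself only cites [7, Proposition 3.3], and the proof there is the standard verification of the localizing axioms via the two-out-of-three corollary of Proposition 4.2 together with cone/cylinder constructions carried out inside $\mathbb{A}$-$\mathbf{mod}$, which is exactly what you do, merely repackaged through the null subcategory $\mathcal{N}$ of objectwise $k$-contractible complexes. Your identification of $S$ as the class of maps with cone in the thick triangulated subcategory $\mathcal{N}$ (using that cones are computed objectwise) is sound, and your caution that the $k$-linear homotopy inverse $\gamma$ cannot be used to complete the Ore squares---so that the completion must come from triangles in $\mathcal{K}^{-}(\mathbb{A}-\mathbf{mod})$ itself---is precisely the point that makes the argument work.
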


\begin{proof}

We showed already that the class of relative quasi-isomorphisms is
closed under the composition of maps. To conclude this class is
localizing we need to justify two facts:

1) The extension conditions: For every $ f\in
Mor_{\mathcal{K}^{-}(\mathbb{A}-\mathrm{bimod})}$ and $s$ relative
quasi-isomorphism there exist
$g\in$$Mor_{\mathcal{K}^{-}(\mathbb{A}-\mathrm{bimod})}$ and $t$
relative quasi-isomorphism such that the following square

$$\xymatrix{\mathbb{N}_{\bullet}\ar[r]^f
\ar[d]_{t}&\mathbb{M}_{\bullet}\ar[d]^
{s}\\
\mathbb{K}_{\bullet}\ar[r]^{g}& \mathbb{L}_{\bullet}}$$ (resp.
$$\xymatrix{\mathbb{L}_{\bullet}\ar[r]^{g}\ar[d]_{s}&\mathbb{K}_{\bullet}\ar[d]^{t}\\
\mathbb{M}_{\bullet}\ar[r]^{f}& \mathbb{N}_{\bullet}}$$

is commutative.

2) Given  $f,g$ two morphisms from $\mathbb{N}_\bullet$ to
$\mathbb{M}_\bullet$, the existence of $s$ relative
quasi-isomorphism with $sf=sg$ is equivalent to the existence of $t$
relative quasi-isomorphism with $ft=gt$.

The proof of  theorem 4, chapter 3 in [5],  which states that the
class of quasi-isomorphisms (not relative) in the homotopic category
of an abelian category is localizing, can be used entirely so we
will not reproduce it here.  One needs to note for 1) that the cone
of the map $t$ constructed there is the same, in
$\mathcal{K}^{-}(\mathbb{A}-\mathrm{bimod})$, as the cone of $s$;
and for 2) that the cone of the map $t$ constructed there is the
cone of $s$ shifted by 1. Thus in both cases $t$ is a relative
quasi-isomorphism.

\end{proof}

Remark that the same result is true for
$\mathcal{K}^{-}(\mathbb{A}!-\mathrm{bimod})$ and
$\mathcal{K}^{-}(\mathbb{A}!-\mathrm{albimod})$.

We now define the relative derived categories by formally inverting
all relative quasi-isomorphisms.
\begin{definition}
Let $\mathcal{A}$ be any of the categories $\mathbb{A}$-bimod,
$\mathbb{A}!$-bimod or $\mathbb{A}!$-albimod and $\sum$ the
appropriate class of relative quasi-isomorphisms.
$$\mathcal{D}_{k}^{-}(\mathcal{A}):=\mathcal{K}^{-}(\mathcal{A})(\Sigma^{-1}),$$
where $\mathcal{K}^{-}$ is the corresponding homotopy category.
\end{definition}
Because $\sum$ is localizing  we may regard the morphisms, in any of
the relative derived categories defined above, as equivalence
classes of diagrams $$\xymatrix{&U\ar[dl]_t\ar[dr]^g\\
                    X&&Y}$$
The maps $t$ and $g$ are morphisms in the homotopy category with
$t\in\sum$. These diagrams are usually called roofs and we adopt
this terminology. In addition, because $\sum$ is a localizing class
the relative derived categories defined above are triangulated.

We begin studying the objects of $\mathcal{D}_k^{-}
({\mathbb{A}}-{\mathrm{bimod}})$  with the complexes of relative
projective bimodules.\newpage
\begin{lemma}
Let $\mathbb{P}_{\bullet}$ be a complex of relative projective
$\mathbb{A}$-$\mathrm{bimodules}$ and
$\xymatrix{\mathbb{R}_{\bullet}\ar[r]^f&\mathbb{P}_{\bullet}}$ a
relative quasi-isomorphism. We have
\begin{center}$Mor_{\mathcal{K}^{-}(\mathbb{A}-\mathrm{bimod})}
(\mathbb{P}_{\bullet},\mathcal{C}{(f)}_{\bullet})=0.$\end{center}
\end{lemma}
\begin{proof}
Because $f$ is a relative quasi-isomorphism the cone
$\mathcal{C}{(f)}_{i}$ is acyclic and allowable $(\forall)
i\in\mathcal{C}$. Given $g\in
Mor_{\mathcal{K}^{-}(\mathbb{A}-\mathrm{bimod})}
(\mathbb{P}_{\bullet},\mathcal{C}{(f)}_{\bullet})$  we show that
$g=(g)_i:\mathbb{P}_i\longrightarrow\mathcal{C}(f)_i, i\geq 0$ is
homotopic to 0 inductively. Since $\mathbb{P}_0$ is a complex of
relative projective $\mathbb{A}$-bimodules we obtain that the map
$g_0$ from $\mathbb{P}_0$ to $\mathcal{C}{(f)}_0$ can be lifted to a
map $\delta_0:\mathbb{P}_0\longrightarrow \mathcal{C}(f)_1$ such
that $d_{\mathcal{C}(f)_1}\delta_0=g_0$. The image of
$g_1-\delta_0d_{\mathbb{P}_1}$ is contained in the image of
$d_{\mathcal{C}(f)_1}$ so it has a lifting
$\delta_1:\mathbb{P}_1\longrightarrow\mathcal{C}(f)_2$ such that
$d_{\mathcal{C}(f)_2}\delta_1=g_1-\delta_0d_{\mathbb{P}_1}$. Now,
the image of $g_2-\delta_1d_{\mathbb{P}_2}$ is contained in the
image of  $d_{\mathcal{C}(f)_2}$ and the conclusion follows
inductively.
\end{proof}
\begin{proposition}
Let $\mathbb{P}_{\bullet}$ be a complex of relative projective
bimodules in $Kom^{-}(\mathbb{A}-\mathrm{bimod})$. The canonical map
\begin{center}$\xymatrix{
Mor_{\mathcal{K}^{-}(\mathbb{A}-\mathrm{bimod})}(\mathbb{P}_{\bullet},\mathbb{M}_{\bullet})
\ar[r]^{can}&Mor_{\mathcal{D}^{-}_{k}(\mathbb{A}\mathrm{-bimod})}
(\mathbb{P}_{\bullet},\mathbb{M}_{\bullet})}$\end{center} is an isomorphism for
all $\mathbb{M}_{\bullet}\in\ Kom^{-}(\mathbb{A}-\mathrm{bimod})$.
\end{proposition}\newpage
\begin{proof}
To prove the injectivity let
$\xymatrix{\mathbb{P}_{\bullet}\ar[r]^{\alpha}&\mathbb{M}_{\bullet}}$
and
$\xymatrix{\mathbb{P}_{\bullet}\ar[r]^{\beta}&\mathbb{M}_{\bullet}}$
such that their corresponding roofs:
$$\xymatrix{&\mathbb{P}_{\bullet}\ar[dl]_{id}\ar[dr]^{\alpha}\\
\mathbb{P}_{\bullet}&&\mathbb{M}_{\bullet}}\mathrm{and}
\xymatrix{&\mathbb{P}_{\bullet}\ar[dl]_{id}\ar[dr]^{\beta}\\
\mathbb{P}_{\bullet}&&\mathbb{M}_{\bullet}}$$\\
are equivalent in $\mathcal{D}^{-}_k(\mathbb{A}-\mathrm{bimod})$.
Thus, we have the commutative diagram in
$\mathcal{K}^{-}(\mathbb{A}-\mathrm{bimod})$
 $$\xymatrix{&&\mathbb{X}_{\bullet}\ar[dl]_{a}\ar[dr]^{b}\\
 &\mathbb{P}_{\bullet}\ar[dl]_{id}\ar[drrr]^{\alpha}&&\mathbb{P}_{\bullet}\ar[dlll]_{id}\ar[dr]^{\beta}\\
 \mathbb{P}_{\bullet}&&&&\mathbb{M}_{\bullet}}$$
We obtain $a=b$ and $\alpha a=\beta b$.

To check that $\alpha=\beta$, apply
$Mor_{\mathcal{K}^{-}(\mathbb{A}-bimod)}(\mathbb{P_{\bullet}},-)$ to
the distinguished triangle
$\xymatrix{\mathbb{X}_{\bullet}\ar[r]^{a}&
\mathbb{P}_{\bullet}\ar[r]&\mathcal{C}(a)_{\bullet}\ar[r]&\mathbb{X}_{\bullet}[1]}$
and use previous lemma  to see that
$Mor_{\mathcal{K}^{-}(\mathbb{A}-bimod)}(\mathbb{P_{\bullet}},\mathcal{C}(a)_{\bullet})$
$=0$. This implies the existence of a map $c$ such that
$ac=id_{\mathbb{P}_\bullet}$ in
$\mathcal{K}^{-}(\mathbb{A}-\mathrm{bimod})$ and the injectivity
follows from here.

For a morphism in $Mor_{\mathcal{D}_k^{-}
({\mathbb{A}}-{\mathrm{bimod}})}(\mathbb{P}_{\bullet},
\mathbb{M}_{\bullet})$ represented by the roof
$$\xymatrix{&\mathbb{R}_{\bullet}\ar[dr]^{\alpha}\ar[dl]_{f}\\
\mathbb{P}_{\bullet}&&\mathbb{M}_{\bullet}}$$ the distinguished
triangle
$\xymatrix{\mathbb{R}_{\bullet}\ar[r]^f&\mathbb{P}_{\bullet}
\ar[r]&\mathcal{C}{(f)}_{\bullet}\ar[r]&\mathbb{R}_{\bullet}{[1]}}$
induces a long exact sequence by applying
$Mor_{\mathcal{K}^{-}(\mathbb{A}-\mathrm{bimod})}(\mathbb{P}_{\bullet},(-))$
to it. Again, by the previous lemma
 $Mor_{\mathcal{K}^{-}(\mathbb{A}-\mathrm{bimod})}
(\mathbb{P}_{\bullet},\mathcal{C}{(f)}_{\bullet})=0$, thus the map
$\xymatrix{Mor_{\mathcal{K}^{-}(\mathbb{A}-\mathrm{bimod})}
(\mathbb{P}_{\bullet},\mathbb{R}_{\bullet})\ar[r]^f&
Mor_{\mathcal{K}^{-}(\mathbb{A}-\mathrm{bimod})}
(\mathbb{P}_{\bullet},\mathbb{P}_{\bullet})}$ is onto, so
$(\exists)$ a map
$\xymatrix{\mathbb{P}_{\bullet}\ar[r]^s&\mathbb{R}_{\bullet}}$ such
that $fs=id_{\mathbb{P}_{\bullet}}$ in
$\mathcal{K}^{-}(\mathbb{A}-\mathrm{bimod})$. Since $f$ is a
relative quasi-isomorphism $s$ is a relative quasi-isomorphism, so
we have the commutative diagram:
$$\xymatrix{&&\mathbb{P}_{\bullet}\ar[dl]_s\ar[dr]^{id}\\
                   &\mathbb{R}_{\bullet}\ar[dl]_f\ar[drrr]^{\alpha}&&\mathbb{P}_{\bullet}
                   \ar[dlll]_{id}\ar[dr]^{\alpha s}\\
                   \mathbb{P}_{\bullet}&&&&\mathbb{M}_{\bullet}}$$
Thus, the roofs
$$\xymatrix{&\mathbb{R}_{\bullet}\ar[dl]_f\ar[dr]^{\alpha}\\
            \mathbb{P}_{\bullet}&&\mathbb{M}_{\bullet}}\mathrm{and}
            \xymatrix{&\mathbb{P}_{\bullet}\ar[dl]_{id}\ar[dr]^{\alpha
            s}\\
            \mathbb{P}_{\bullet}&&\mathbb{M}_{\bullet}}$$
are equivalent and since the second is the image of $\alpha s$ the
surjectivity is proved.
\end{proof}
Note that relative projective complexes in
$Kom^{-}(\mathbb{A}!-\mathrm{bimod})$ and
$Kom^{-}(\mathbb{A}!-\mathrm{albimod})$ satisfy the same property.

We prove now that each complex of $\mathbb{A}$-bimodules is relative
quasi-isomorphic to a complex of relative projective bimodules. For
this we need the following
\begin{proposition}
Let $A$ be a $k$ algebra and assume that we have a double complex of
$A$ bimodules
$$\xymatrix{
&\vdots\ar[d]^{d^1}&\vdots\ar[d]^{d^0}&\vdots\ar[d]^{d_M}\\
\cdots\ar[r]^{d_2}&X_{12}\ar[r]^{d_2}\ar[d]^{d^1}&X_{02}\ar[r]^{\varepsilon_2}\ar[d]^{d^0}&M_2\ar[d]^{d_M}\ar[r]&0\\
\cdots\ar[r]^{d_1}&X_{11}\ar[r]^{d_1}\ar[d]^{d^1}&X_{01}\ar[d]^{d^0}\ar[r]^{\varepsilon_1}&M_1\ar[d]^{d_M}\ar[r]&0\\
\cdots\ar[r]^{d_0}&X_{10}\ar[r]^{d_0}&X_{00}\ar[r]^{\varepsilon_0}&M_0\ar[r]&0}$$
such that:

a) Each row is $k$ contractible. ( $i.e.$ There exist $k$-bimodule
maps

$\xymatrix{X_{(k-1)i}\ar[r]^{t_i^k}&X_{ki}}$  such that
$d_it_i^{k+1}+t_i^kd_i=id_{X_{ki}}$.)

b) The following diagrams are commutative:

$$\xymatrix{X_{ki}\ar[d]_{d^k}&X_{(k-1)i}\ar[d]^{d^{k-1}}\ar[l]_{t_i^k}\\
           X_{k(i-1)}&X_{(k-1)(i-1)}\ar[l]_{t_{i-1}^k}}
\xymatrix{X_{0i}\ar[d]_{d^0}&M_i\ar[d]^{d_M}\ar[l]_{t_i^0}\\
           X_{0(i-1)}&M_{i-1}\ar[l]_{t_{i-1}^0}}$$
           for all
           $k,i\geq 0$, Then

1.$\xymatrix{M_{\bullet}\ar[rr]^{t_{\bullet}^0}&&(TotX_{\bullet
\bullet})}$ and $\xymatrix{(TotX_{\bullet
\bullet})\ar[rr]^{\varepsilon_{\bullet}}&&M_{\bullet}}$ are maps of
complexes of $k$-bimodules, where $\varepsilon_i=0$ on $X_{jk},
j+k=i$ if $j>0$.

2. $\varepsilon_{\bullet}t_{\bullet}^0=id_{M_{\bullet}}$ and
$t_{\bullet}^0\varepsilon_{\bullet}\sim id_{TotX_{\bullet \bullet}}$
in $Kom^{-}(k-bimod),$ where $\sim $=homotopy equivalence.

\end{proposition}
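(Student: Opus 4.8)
The plan is to realize both conclusions as a single deformation retraction of $\mathrm{Tot}X_{\bullet\bullet}$ onto $M_\bullet$ whose contracting homotopy is assembled directly from the row contractions $t_i^h$. Throughout I write $T_n=\bigoplus_{h+i=n}X_{hi}$ for the degree-$n$ term of the total complex; this is a \emph{finite} direct sum because the complexes are bounded to the right, so no convergence question arises and $\mathrm{Tot}X_{\bullet\bullet}\in Kom^{-}(A-\mathbf{mod})$. The total differential is $D=d_{\mathrm{hor}}+(-1)^h d_{\mathrm{ver}}$, with $d_{\mathrm{hor}}=d_i$ and $d_{\mathrm{ver}}=d^h$ on the summand $X_{hi}$, and $D^2=0$ follows from the commutativity of the squares of the double complex (this forces $d^{h-1}d_i=d_id^h$, which is exactly what kills the cross terms).

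First I would verify part (1). The only nonzero components of $\varepsilon_\bullet D-d_M\varepsilon_\bullet$ on a summand $X_{hi}$ occur for $h=0$ and $h=1$: on $X_{0i}$ the identity $\varepsilon_{i-1}d^0=d_M\varepsilon_i$ is the commutativity of the right-hand augmentation squares of the double complex, and on $X_{1i}$ the vanishing $\varepsilon_i d_i=0$ is exactness of the augmented row at $X_{0i}$; for $h\geq 2$ both sides are zero. For $t_\bullet^0$ the equation $Dt_\bullet^0=t_\bullet^0 d_M$ reduces, on $M_i$, to $d^0 t_i^0=t_{i-1}^0 d_M$, which is precisely the second commuting square of hypothesis (b). Hence both are chain maps.

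Next I would treat part (2). The equality $\varepsilon_\bullet t_\bullet^0=\mathrm{id}_{M_\bullet}$ is immediate: on $M_i$ it is the relation $\varepsilon_i t_i^0=\mathrm{id}_{M_i}$ coming from contractibility of the augmented row at the $M_i$ spot. For the harder homotopy $t_\bullet^0\varepsilon_\bullet\sim\mathrm{id}_{\mathrm{Tot}X}$, define the degree $+1$ map $s$, which is only $k$-linear, by $s|_{X_{hi}}=t_i^{h+1}:X_{hi}\to X_{(h+1)i}$ and compute $Ds+sD$. The horizontal contribution on $X_{hi}$ is $d_i t_i^{h+1}+t_i^h d_i$; for $h\geq 1$ this is $\mathrm{id}_{X_{hi}}$ by the contraction relation, while for $h=0$ the relation $d_i t_i^1+t_i^0\varepsilon_i=\mathrm{id}_{X_{0i}}$ gives $\mathrm{id}_{X_{0i}}-t_i^0\varepsilon_i$. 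Since $t_i^0\varepsilon_i$ is exactly $(t_\bullet^0\varepsilon_\bullet)|_{X_{0i}}$ and $t_\bullet^0\varepsilon_\bullet$ vanishes for $h\geq 1$, the horizontal part of $Ds+sD$ equals $\mathrm{id}_{\mathrm{Tot}X}-t_\bullet^0\varepsilon_\bullet$.

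It remains to see that the vertical contribution cancels, and this is where the sign bookkeeping together with the first commuting square of (b) does the work — the step I expect to be the main, though purely formal, obstacle. On $X_{hi}$ the two vertical terms are $(-1)^{h+1}d^{h+1}t_i^{h+1}$ (from $d_{\mathrm{ver}}s$) and $(-1)^h t_{i-1}^{h+1}d^h$ (from $s\,d_{\mathrm{ver}}$); their sum is $(-1)^h\bigl(t_{i-1}^{h+1}d^h-d^{h+1}t_i^{h+1}\bigr)$, which vanishes by the first commuting diagram of (b) with $h$ replaced by $h+1$. Thus $Ds+sD=\mathrm{id}_{\mathrm{Tot}X}-t_\bullet^0\varepsilon_\bullet$ in $Kom^{-}(k-\mathbf{mod})$, establishing $t_\bullet^0\varepsilon_\bullet\sim\mathrm{id}_{\mathrm{Tot}X}$ and completing the proof. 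The only genuine care needed is to keep the sign convention on $D$ consistent throughout, so that the vertical terms cancel while the horizontal terms acquire no spurious signs; everything else is a direct substitution of the hypotheses.
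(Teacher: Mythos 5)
Your proof is correct: the paper itself gives no argument for this proposition (it simply cites proposition 3.7 of [7]), and your direct verification --- assembling the homotopy $s|_{X_{hi}}=t_i^{h+1}$ on the total complex and checking that $Ds+sD=\mathrm{id}-t_\bullet^0\varepsilon_\bullet$, with the vertical cross-terms cancelling via hypothesis (b) and the $h=0$ horizontal term producing $\mathrm{id}_{X_{0i}}-t_i^0\varepsilon_i$ --- is exactly the standard proof such a statement admits. The one delicate point, that the horizontal differential of $\mathrm{Tot}X_{\bullet\bullet}$ vanishes on the $X_{0i}$ column so the contraction identity there must be read as $d_it_i^1+t_i^0\varepsilon_i=\mathrm{id}_{X_{0i}}$, is handled correctly.
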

\begin{proof}
1. The map $t_{\bullet}^0 $ is a map of complexes by b) and
$\varepsilon_{\bullet}$ is a map of complexes because
$d_M\varepsilon_{i+1}=d^0\varepsilon_i$ and $\varepsilon_id_i=0$.

2. The only thing to prove here is
$t_{\bullet}^0\varepsilon_{\bullet}\sim id_{TotX_{\bullet \bullet}}$
in $Kom^{-}(k-bimod).$ For $n\geq 0$ we define the map
$\xymatrix{(TotX_{\bullet \bullet})^n\ar[r]^{h^n}&(TotX_{\bullet
\bullet})^{n+1}}$ by $h^n:=(t_0^{n+1},t_1^{n},\ldots,t_n^1,0)$. It
is a simple exercise to check that $h^{\bullet}d_{TotX_{\bullet
\bullet}}+d_{TotX_{\bullet
\bullet}}h^{\bullet}=id-t_{\bullet}^0\varepsilon_{\bullet}$.
\end{proof}
\begin{theorem}

For each
$\mathbb{M}_{\bullet}\in\mathcal{D}^{-}_{k}(\mathbb{A}-\mathrm{bimod})$
there exist
$\mathcal{U}\mathbb{M}_{\bullet}\in\mathcal{D}^{-}_{k}(\mathbb{A}-\mathrm{bimod})$
and
$\xymatrix{\mathcal{U}\mathbb{M}_{\bullet}\ar[r]^{\varepsilon}&\mathbb{M}_{\bullet}}$
a relative quasi-isomorphism such that
$\mathcal{U}\mathbb{M}_{\bullet}$ is a complex of relative
projective $\mathbb{A}$-bimodules.
\end{theorem}
\begin{proof}
We described in section 2 a method of constructing a relative
projective allowable resolution
$Tot\mathcal{S}_{\bullet}\mathcal{B}_{\bullet}(\mathbb{M})\longrightarrow\mathbb{M}$,
for each $\mathbb{M}\in\mathbb{A}$-bimod. We use this for each term
$\mathbb{M}_i$ of the complex $\mathbb{M}_\bullet$, $i\geq 0$. We
obtain a double complex with augmented column
$\mathbb{M}_{\bullet}$. In addition, each row is contractible and
for all $p\in\mathcal{C}$ we obtain a double complex of
$\mathbb{A}^{p}$-bimodules which satisfies the conditions of the
previous proposition.  Thus, by taking the total complex of the
double complex with augmented column $\mathbb{M}_\bullet$ we obtain
the desired complex of relative projective $\mathbb{A}$-bimodules,
$\mathcal{U}\mathbb{M}_{\bullet}$, together with a relative
quasi-isomorphism
$\xymatrix{\mathcal{U}\mathbb{M}_{\bullet}\ar[r]^{\varepsilon}&\mathbb{M}_{\bullet}}.$
\end{proof}
Note that the same argument shows that for each complex
$\mathbb{M}_{\bullet}!\in\mathcal{D}^{-}_{k}(\mathbb{A}!-\mathrm{bimod})$
the total complex, $Tot\mathcal{T}_{\bullet}\mathbb{M}_{\bullet}$,
of the double complex $\mathcal{T}_{\bullet}\mathbb{M}_{\bullet}$
obtained by taking the allowable resolution of each $\mathbb{M}_i!$
described in section 2, gives a relative quasi-isomorphism
$\xymatrix@1{(Tot\mathcal{T}_{\bullet}\mathbb{M}_{\bullet})
\ar[r]^(0.6){\varepsilon}&\mathbb{M}_{\bullet}!}.$ In addition, by
theorem 2.4., the left adjoint
$\raisebox{.45ex}{\textup{\textbf{!`}}}$ to $!$ has the
 property that
 $(\xymatrix@1{Tot\mathcal{T_{\bullet}}{\mathbb{M}}_{\bullet}\ar[r]^(0.6){\varepsilon}&\mathbb{M}_{\bullet}!})
 \raisebox{.45ex}{\textup{\textbf{!`}}}$
 is isomorphic to
 $\xymatrix@1{Tot\mathcal{S_{\bullet}}\mathbb{M}_{\bullet}\ar[r]^(0.6){\varepsilon
 \raisebox{.45ex}{\textup{\textbf{!`}}}}&\mathbb{M}_{\bullet}},$
so $\varepsilon\raisebox{.45ex}{\textup{\textbf{!`}}}$ is a relative
quasi-isomorphism.

To see how the relative derived categories defined earlier relate to
Hochschild cohomology recall that given a presheaf of $k$-algebras
$\mathbb{A}$ the relative Hochschild cohomology of $\mathbb{A}$,
denoted $\mathbf{H}^{\bullet}(\mathbb{A},(-))$, is the same as the
relative Yoneda cohomology
$\mathbf{Ext}^{\bullet}_{\mathbb{A}-\mathbb{A}}(\mathbb{A},(-))$ of
the category of $\mathbb{A}$-bimodules. The word $\mathbf{relative}$
appears as an indication that $k$ is not necessarily a field, in
general only a commutative ring.

Thus, the relative Hochschild cohomology of a presheaf of algebras,
with coefficients in an arbitrary $\mathbb{A}$-bimodule
$\mathbb{M}$, is computed by taking any relative projective
allowable resolution of $\mathbb{A}$, applying
$Hom_{\mathbb{A}-\mathbb{A}}((-),\mathbb{M})$ to it and then taking
the homology of the resulting complex.

\begin{theorem}
$\mathbf{Ext}^{i}_{\mathbb{A}-\mathbb{A}}(\mathbb{M},\mathbb{N})\simeq
Mor_{\mathcal{D}^{-}_{k}(\mathbb{A}-\mathrm{bimod})}
(\mathbb{M}_{\bullet},\mathbb{N}_{\bullet}{[i]}).$ In particular,
$\mathbf{H}^{i}(\mathbb{A},\mathbb{N})\simeq
Mor_{\mathcal{D}^{-}_{k}(\mathbb{A}-\mathrm{bimod})}
(\mathbb{A}_{\bullet},\mathbb{N}_{\bullet}{[i]}).$
\end{theorem}
\begin{proof}

Let $Tot\mathcal{B}_{\bullet}\mathcal{S}_{\bullet}\mathbb{M}$ the
relative allowable projective resolution described in section
2.$\;$( same as  $\mathcal{U}\mathbb{M}_{\bullet}$ in this case
since $\mathbb{M}_i=0$, $(\forall) i\neq 0$.) Using proposition
3.6. and theorem 3.8. we obtain the isomorphisms\\
$\mathbf{Ext}^{i}_{\mathbb{A}-\mathbb{A}}(\mathbb{M},\mathbb{N})=
H^{i}(Hom_{\mathbb{A}-\mathbb{A}}(\mathcal{U}\mathbb{M}_{\bullet},\mathbb{N}))=
Mor_{\mathcal{K}^{-}(\mathbb{A}-\mathrm{bimod})}(\mathcal{U}\mathbb{M}_{\bullet},\mathbb{N}_{\bullet}{[i]})
$\\$\cong
Mor_{\mathcal{D}^{-}_{k}(\mathbb{A}-\mathrm{bimod})}(\mathcal{U}\mathbb{M}_{\bullet},\mathbb{N}_{\bullet}{[i]})\cong
Mor_{\mathcal{D}^{-}_{k}(\mathbb{A}-\mathrm{bimod})}(\mathbb{M}_{\bullet},\mathbb{N}_{\bullet}{[i]}).$
\end{proof}

\section{Functors between derived categories}

The functor
$\xymatrix{\mathbb{A}-\mathrm{bimod}\ar[r]^{!}&\mathbb{A}!-\mathrm{bimod}}$
is exact and preserves allowability so it induces a functor between
the corresponding relative derived categories. In this section we
prove the following property of the induced functor.

\begin{theorem} The functor $\xymatrix@1{\mathcal{D}_k^{-}
({\mathbb{A}}-{\mathrm{bimod}})\ar[r]^{!}&\mathcal{D}_k^{-}
({\mathbb{A!}}-{\mathrm{bimod}})}$ is full and faithful. That is,
\begin{center}$\xymatrix{ Mor_{\mathcal{D}_k^{-}
({\mathbb{A}}-{\mathrm{bimod}})}(\mathbb{M}_{\bullet},
\mathbb{N}_{\bullet})\ar[r]^{!}&Mor_{\mathcal{D}_k^{-}
({\mathbb{A!}}-{\mathrm{bimod}})}(\mathbb{M}_{\bullet}!,
\mathbb{N}_{\bullet}!)}$\end{center} is an isomorphism of sets for all
$\mathbb{M}_\bullet,
\mathbb{N}_\bullet\in\mathcal{D}_{k}^{-}(\mathbb{A}-\mathrm{bimod})$.
\end{theorem}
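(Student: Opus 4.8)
The plan is to mirror, essentially line for line, the proof of Theorem 4.9, with the functor $!$ now playing the role that $\mathbf{d}^*$ played there and a suitable adjoint of $!$ playing the role of $\mathbf{d}_!$. Three ingredients are required, each the evident $(\mathbb{A}!)^e$-analogue of a tool already in hand. First, Proposition 4.5 is stated for an arbitrary $k$-algebra (equivalently, an arbitrary diagram), so it applies verbatim to complexes of relative projective $(\mathbb{A}!)^e$-modules: for such a complex $\mathbb{P}_\bullet$ and a relative quasi-isomorphism $g$ one has $Mor_{\mathcal{K}^-((\mathbb{A}!)^e-\mathbf{mod})}(\mathbb{P}_\bullet,\mathcal{C}(g)_\bullet)=0$, the induced map $g_*$ on $Mor_{\mathcal{K}^-}(\mathbb{P}_\bullet,-)$ is onto, and the canonical map $\mathbf{can}$ to the relative derived category is an isomorphism. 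Second, as already remarked just before the statement, $!$ is exact, $k$-linear and preserves allowability, hence preserves relative quasi-isomorphisms and descends to the relative derived categories; I shall also use that $!$ admits a left adjoint $L$ (the functor built in [6] and [7]) and that, since $!$ is full and faithful on the underlying bimodule categories, the counit $\varepsilon_{\mathbb{M}_\bullet}:L(\mathbb{M}_\bullet!)\rightarrow\mathbb{M}_\bullet$ of the adjunction $(L,!)$, extended to complexes, is a natural isomorphism. Third, and this is the crux, I require the exact analogue of Theorems 3.3 and 4.8 for $!$: for every $\mathbb{M}_\bullet\in\mathcal{D}^-_k(\mathbb{A}^e-\mathbf{mod})$ a complex of relative projective $(\mathbb{A}!)^e$-modules $\mathcal{V}\mathbb{M}_\bullet$ together with a relative quasi-isomorphism $\varepsilon:\mathcal{V}\mathbb{M}_\bullet\rightarrow\mathbb{M}_\bullet!$ such that $L(\varepsilon)$ is again a relative quasi-isomorphism. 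As in Theorem 4.8 this is produced by resolving each term $\mathbb{M}_i!$ by a double complex of relative projective $(\mathbb{A}!)^e$-modules whose rows are $k$-contractible and compatible with their contracting homotopies in the sense of conditions (a) and (b) of Proposition 4.7, and then passing to the total complex.

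Granting these ingredients, fullness is the surjectivity half of Theorem 4.9. Given a roof $\mathbb{M}_\bullet!\xleftarrow{\,s\,}\mathbb{X}_\bullet\xrightarrow{\,f\,}\mathbb{N}_\bullet!$ representing a morphism in $Mor_{\mathcal{D}^-_k((\mathbb{A}!)^e-\mathbf{mod})}(\mathbb{M}_\bullet!,\mathbb{N}_\bullet!)$, take $\mathcal{V}\mathbb{M}_\bullet$ and $\varepsilon$ as above. Since $s$ is a relative quasi-isomorphism and $\mathcal{V}\mathbb{M}_\bullet$ is a complex of relative projectives, Proposition 4.5(b) produces $q\in Mor_{\mathcal{K}^-}(\mathcal{V}\mathbb{M}_\bullet,\mathbb{X}_\bullet)$ with $qs=\varepsilon$; as $s$ and $\varepsilon$ are relative quasi-isomorphisms so is $q$, and the given roof is therefore equivalent to the one with apex $\mathcal{V}\mathbb{M}_\bullet$ and legs $\varepsilon,fq$. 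Applying $L$ and post-composing with the counit isomorphisms $\varepsilon_{\mathbb{M}_\bullet}$ and $\varepsilon_{\mathbb{N}_\bullet}$ yields a roof $\mathbb{M}_\bullet\leftarrow L\mathcal{V}\mathbb{M}_\bullet\rightarrow\mathbb{N}_\bullet$ on the source side, whose left leg $\varepsilon_{\mathbb{M}_\bullet}L(\varepsilon)$ is a relative quasi-isomorphism precisely because $L(\varepsilon)$ is. Writing $\eta$ for the unit of $(L,!)$ extended to complexes, the triangle identity $!(\varepsilon_{\mathbb{M}_\bullet})\eta_{\mathbb{M}_\bullet!}=id$ together with the naturality of $\eta$ shows, exactly as in the commutative diagram displayed in the proof of Theorem 4.9, that the image under $!$ of this source roof is equivalent to the roof with legs $\varepsilon,fq$; hence the original morphism lies in the image of $!$.

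Faithfulness is the injectivity half of Theorem 4.9. Suppose two roofs $\mathbb{M}_\bullet\xleftarrow{\,s\,}\mathbb{S}_\bullet\xrightarrow{\,f\,}\mathbb{N}_\bullet$ and $\mathbb{M}_\bullet\xleftarrow{\,t\,}\mathbb{T}_\bullet\xrightarrow{\,g\,}\mathbb{N}_\bullet$ have $!$-images equivalent in $\mathcal{D}^-_k((\mathbb{A}!)^e-\mathbf{mod})$, with the equivalence witnessed by an apex $\mathbb{X}_\bullet$ and a relative quasi-isomorphism $u$ to $\mathbb{S}_\bullet!$. Choose $\alpha:\mathcal{V}\mathbb{S}_\bullet\rightarrow\mathbb{S}_\bullet!$ as in the resolution theorem; since $u$ is a relative quasi-isomorphism and $\mathcal{V}\mathbb{S}_\bullet$ is relative projective, Proposition 4.5(a) gives $Mor_{\mathcal{K}^-}(\mathcal{V}\mathbb{S}_\bullet,\mathcal{C}(u)_\bullet)=0$, so $\alpha$ lifts through $u$ to a map $\beta:\mathcal{V}\mathbb{S}_\bullet\rightarrow\mathbb{X}_\bullet$ with $u\beta=\alpha$, and $\beta$ is a relative quasi-isomorphism. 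This replaces the witnessing apex by $\mathcal{V}\mathbb{S}_\bullet$. Applying $L$ to the resulting commutative diagram, the maps $L(\alpha)$ and $L(s!)$ are relative quasi-isomorphisms (the former by the resolution theorem, the latter because the counit identifies it with $s$), and the counit isomorphisms $\varepsilon_{\mathbb{R}_\bullet}:L(\mathbb{R}_\bullet!)\rightarrow\mathbb{R}_\bullet$ identify the resulting roofs with the original source roofs. The diagram therefore exhibits the two original roofs as equivalent in $\mathcal{D}^-_k(\mathbb{A}^e-\mathbf{mod})$, proving injectivity.

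The only genuine obstacle is the third ingredient. Everything else is a formal transport of the roof calculus of Theorem 4.9 once the adjunction $(L,!)$ with invertible counit is available, but the existence of a relative projective allowable resolution of $\mathbb{M}!$ that $L$ carries back to a relative projective allowable resolution of $\mathbb{M}$ is SCCT-specific content: it is the analogue for $!$ of the ``spreading out'' of the $\mathbf{GSB}$ resolution used in Theorem 3.3, and, as the discussion following Theorem 3.3 warns, one cannot expect an arbitrary resolution to remain a resolution after applying $L$. Verifying that the double complex assembled from the $\mathbf{GSB}$ data satisfies hypotheses (a) and (b) of Proposition 4.7 after passing through $!$, and that $L$ returns its total complex to a bona fide resolution of $\mathbb{M}$, is where the real work of [6] and [7] resides.
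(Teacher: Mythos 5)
The paper does not actually prove this statement: Theorem~5.1 is imported from [7] (``In [7] we proved that the induced functor is full and faithful'') and no argument is given here, so there is nothing in the text to compare your write-up against line by line. Taken on its own terms, your proposal correctly identifies the architecture one would expect: Proposition~4.5 does apply verbatim to $(\mathbb{A}!)^e$, the roof manipulations are a faithful transport of the two halves of the proof of Theorem~4.9, and you are right that the crux is a relative projective resolution $\mathcal{V}\mathbb{M}_\bullet\rightarrow\mathbb{M}_\bullet!$ that the reverse functor carries back to a resolution of $\mathbb{M}_\bullet$.

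The genuine gap is in your second ingredient, which you treat as formal but which is not. You posit a \emph{left adjoint} $L$ of $!$ with invertible counit and then run the unit/counit and triangle-identity computations exactly as for $(\mathbf{d}_!,\mathbf{d}^*)$. But $!$ is not restriction along a functor between index categories, and a left adjoint for it is not automatic: a right adjoint preserves products, whereas the canonical map $(\prod_\alpha\mathbb{M}_\alpha)!\rightarrow\prod_\alpha(\mathbb{M}_\alpha!)$ interchanges the coproduct over $\{j: i\leq j\}$ with the product over $\alpha$ and fails to be surjective whenever some up-set of the poset is infinite (already for $\mathcal{C}=(\mathbb{N},\leq)$ with all $\mathbb{A}^i=k$). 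Since Theorem~5.1 is stated for an arbitrary poset, the adjunction $(L,!)$ you lean on does not exist in the stated generality, and the surjectivity argument --- which uses the unit $\eta$, its naturality, and the identity $!(\varepsilon_{\mathbb{M}_\bullet})\eta_{\mathbb{M}_\bullet!}=id$ --- has no foundation as written. Whatever reverse functor [6] and [7] actually attach to $!$ (presumably built from the idempotents $1_i\varphi^{ii}$ rather than by adjunction) has to be specified, together with the natural transformations replacing $\eta$ and $\varepsilon$, before the diagram chases make sense. Combined with the third ingredient, which you do flag as deferred, this means both pieces of genuine content --- the reverse functor with its comparison maps and the compatible resolution --- are outsourced to the very reference [7] that the paper cites in place of a proof; what remains is an accurate map of where the work lies rather than a proof of the theorem.
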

The difficulties in proving the theorem  reside in two places.
First, since the morphisms in $\mathcal{D}_k^{-}
({\mathbb{A}}-{\mathrm{bimod}})$ and $\mathcal{D}_k^{-}
({\mathbb{A!}}-{\mathrm{bimod}})$ are equivalence classes of roofs,
it is not clear how one can find ancestors in $\mathcal{D}_k^{-}
({\mathbb{A}}-{\mathrm{bimod}})$ for arbitrary roofs in
$\mathcal{D}_k^{-} ({\mathbb{A!}}-{\mathrm{bimod}})$.

A good sign for that would be the existence of a left adjoint for
$!$, but there is none. Fortunately, a left adjoint exists between
${\mathbb{A}}$-${\mathrm{bimod}}$ and the full subcategory of
${\mathbb{A!}}$-${\mathrm{bimod}}$ of aligned bimodules. Second,
left adjoints do not necessarily preserve all relative
quasi-isomorphisms. However, this left adjoint preserves some that
can be used to trace back ancestors for any roof in
$Mor_{\mathcal{D}_k^{-}
(\mathbb{A}!-{\mathrm{bimod}})}(\mathbb{M}_{\bullet}!,
\mathbb{N}_{\bullet}!).$

We will prove that
$\xymatrix{\mathcal{D}^{-}_{k}(\mathbb{A}-\mathrm{bimod})\ar[r]^(.48){!}&
\mathcal{D}^{-}_{k}(\mathbb{A}!-\mathrm{albimod})}$ and the
inclusion
$\xymatrix{\mathcal{D}^{-}_{k}(\mathbb{A}!-\mathrm{albimod})\ar[r]^{inc}&
\mathcal{D}^{-}_{k}(\mathbb{A}!-\mathrm{bimod})}$ are full and
faithful.
\begin{proposition}
The functor $$\xymatrix{ \mathcal{D}_k^{-}
({\mathbb{A}}-{\mathrm{bimod}})\ar[r]^(.48){!}& \mathcal{D}_k^{-}
({\mathbb{A}}!-{\mathrm{albimod}})}$$ is full and faithful.
\end{proposition}
\begin{proof} To prove the proposition we need to show that
$$\xymatrix@1{Mor_{\mathcal{D}_k^{-}
({\mathbb{A}}-{\mathrm{bimod}})}(\mathbb{M}_{\bullet},
\mathbb{N}_{\bullet})\ar[r]^(.48){!}& Mor_{\mathcal{D}_k^{-}
({\mathbb{A!}}-{\mathrm{albimod}})}(\mathbb{M}_{\bullet}!,
\mathbb{N}_{\bullet}!)}$$ is an isomorphism for all
$\mathbb{M}_{\bullet}$ and $\mathbb{N}_{\bullet} \in
\mathcal{D}_k^{-} ({\mathbb{A}}-{\mathrm{bimod}}).$

Since for all $\mathbb{M}_{\bullet}\in \mathcal{D}_k^{-}
({\mathbb{A}}-{\mathrm{bimod}})$ there exist $\xymatrix{\mathcal{U}
\mathbb{M}_{\bullet}\ar[r]^{\varepsilon}&\mathbb{M}_{\bullet}}$
relative quasi-isomorphism in $\mathcal{D}_k^{-}
({\mathbb{A}}-{\mathrm{bimod}})$ such that $\mathcal{U}\mathbb{M}_i$
is relative projective for all $i$, we may assume that
$\mathbb{M}_{\bullet}$ is a complex of relative projective
$\mathbb{A}$ bimodules. This is because of the commutative diagram
$$\xymatrix@1{
 Mor_{\mathcal{D}_k^{-}
({\mathbb{A}}-{\mathrm{bimod}})}(\mathbb{M}_{\bullet},
\mathbb{N}_{\bullet})\ar[r]^(.48){!}\ar[d]_{\varepsilon}
            & Mor_{\mathcal{D}_k^{-}
({\mathbb{A!}}-{\mathrm{albimod}})}(\mathbb{M}_{\bullet}!,
\mathbb{N}_{\bullet}!)  \ar[d]_{\varepsilon!} \\
Mor_{\mathcal{D}_k^{-}
({\mathbb{A}}-{\mathrm{bimod}})}(\mathcal{U}\mathbb{M}_{\bullet},
\mathbb{N}_{\bullet}) \ar[r]^(.48){!} & Mor_{\mathcal{D}_k^{-}
({\mathbb{A!}}-{\mathrm{albimod}})}(\mathcal{U}\mathbb{M}_{\bullet}!,
\mathbb{N}_{\bullet}!) } $$ where $\varepsilon$ and $\varepsilon!$
are isomorphisms.

Because $(\mathbb{M}_i)^{p}$ is a relative projective
$\mathbb{A}^{p}$-bimodule, $(\forall) p\in\mathcal{C}$, each
$\mathbb{M}_i!$ admits a resolution of relative projective aligned
$\mathbb{A!}$-bimodules obtained using $\mathcal{T}_{\bullet}$. The
total complex of the double complex obtained by taking the
resolution of each $\mathbb{M}_i!$ gives a relative
quasi-isomorphism $\xymatrix@1{Tot(\mathcal{T_{\bullet}}
{\mathbb{M}}_{\bullet})\ar[r]^(.6){\varepsilon}&\mathbb{M}_{\bullet}!,}$where
each $Tot(\mathcal{T_{\bullet}}{\mathbb{M}}_{\bullet})_i$ is a
relative projective aligned $\mathbb{A!}$ bimodule.

Moreover, the left adjoint $\raisebox{.45ex}{\textup{\textbf{!`}}}$
has the property that
 $(\xymatrix@1{Tot\mathcal{T_{\bullet}}{\mathbb{M}}_{\bullet}
 \ar[r]^(.65){\varepsilon}&\mathbb{M}_{\bullet}!})\raisebox{.45ex}{\textup{\textbf{!`}}}$
is isomorphic to
$\xymatrix@1{Tot\mathcal{S_{\bullet}}\mathbb{M}_{\bullet}\ar[r]^(.6){\varepsilon
\raisebox{.45ex}{\textup{\textbf{!`}}}}&\mathbb{M}_{\bullet}}$ and
$\varepsilon\raisebox{.45ex}{\textup{\textbf{!`}}}$ is a relative
quasi-isomorphism.
Now, given any roof $$\xymatrix{&X_{\bullet}\ar[dl]_{s}\ar[dr]^f\\
\mathbb{M}_{\bullet}!&&\mathbb{N}_{\bullet}!}$$ in
$Mor_{\mathcal{D}_k^{-}
({\mathbb{A!}}-{\mathrm{albimod}})}(\mathbb{M}_{\bullet}!,
\mathbb{N}_{\bullet}!)$ take
$\xymatrix@1{Tot(\mathcal{T_{\bullet}}{\mathbb{M}}_{\bullet})\ar[r]^(.65){\varepsilon}&\mathbb{M}_{\bullet}!}$
as above.

By applying $Mor_{\mathcal{D}_k^{-}
({\mathbb{A!}}-{\mathrm{albimod}})}(Tot(\mathcal{T_{\bullet}}{\mathbb{M}}_{\bullet}),(-)
)$ to the distinguished triangle $$\xymatrix{
X_{\bullet}\ar[r]^s&\mathbb{M}_{\bullet}!\ar[r]^{}&\mathcal{C}(s)_{\bullet}\ar[r]^{}&X_{\bullet}[1]}$$
we obtain a long exact sequence.\\ In this sequence
$Mor_{\mathcal{D}_k^{-}}(Tot(\mathcal{T_{\bullet}}{\mathbb{M}}_{\bullet}),\mathcal{C}(s)_{\bullet})=0$
because $\mathcal{C}(s)_{\bullet}$ is contractible, as a complex of
$k$-bimodules, and
$Tot(\mathcal{T_{\bullet}}{\mathbb{M}}_{\bullet})$ is a complex of
relative projective aligned $\mathbb{A}!$ bimodules, so the map
$$\xymatrix{Mor_{\mathcal{D}_k^{-}}(Tot(\mathcal{T_{\bullet}}{\mathbb{M}}_{\bullet}),X)
\ar[r]^(.48)s&
Mor_{\mathcal{D}_k^{-}}(Tot(\mathcal{T_{\bullet}}{\mathbb{M}}_{\bullet}),
\mathbb{M}_\bullet! )}$$ is onto. Because $\varepsilon\in
Mor_{\mathcal{D}_k^{-}
({\mathbb{A!}}-{\mathrm{albimod}})}(Tot(\mathcal{T_{\bullet}}{\mathbb{M}}_{\bullet})
,\mathbb{M}_{\bullet}!),$ there exist\\ $q\in
$$Mor_{\mathcal{D}_k^{-}
({\mathbb{A!}}-{\mathrm{albimod}})}(Tot(\mathcal{T_{\bullet}}{\mathbb{M}}_{\bullet}),X_{\bullet})$
such that the diagram $$ \xymatrix{&
Tot(\mathcal{T_{\bullet}}{\mathbb{M}}_{\bullet})\ar[dl]_q\ar[d]^{\varepsilon}\\
X_{\bullet}\ar[r]^s&\mathbb{M}_{\bullet}!}$$ commutes. The map $q$
is a relative quasi-isomorphism because both $s$ and $\varepsilon$
are and we have the equivalence of roofs
$$\xymatrix{&X_{\bullet}\ar[dl]_{s}\ar[dr]^f\\
\mathbb{M}_{\bullet}!&&\mathbb{N}_{\bullet}!} \mathrm{and}
\xymatrix{
&Tot(\mathcal{T_{\bullet}}{\mathbb{M}}_{\bullet})\ar[dl]_{\varepsilon}\ar[dr]^{fq}\\
\mathbb{M}_{\bullet}!&&\mathbb{N}_{\bullet}! }$$ because the diagram
$$\xymatrix{&&Tot(\mathcal{T_{\bullet}}{\mathbb{M}}_{\bullet})\ar[dl]_(.57)q\ar[dr]^{id}\\
&X_{\bullet}\ar[dl]_s\ar[drrr]^(.6)f
&&Tot(\mathcal{T_{\bullet}}{\mathbb{M}}_{\bullet})\ar[dlll]_(.6){\varepsilon}
\ar[dr]^{fq}\\
\mathbb{M_{\bullet}!}&&&&\mathbb{N_{\bullet}!} }$$ is commutative.

Since
$(\xymatrix{Tot\mathcal{T_{\bullet}}{\mathbb{M}}_{\bullet}\ar[r]^(.6){\varepsilon}&\mathbb{M}_{\bullet}!})
\raisebox{.45ex}{\textup{\textbf{!`}}}$ is isomorphic to
$\xymatrix{Tot\mathcal{S_{\bullet}}\mathbb{M}_{\bullet}\ar[r]^(.6){\varepsilon\raisebox{.45ex}{\textup{\textbf{!`}}}}
&\mathbb{M}_{\bullet}}$ and
$\varepsilon\raisebox{.45ex}{\textup{\textbf{!`}}}$ is a relative
quasi-isomorphism, the roof $$\xymatrix{ &
(Tot(\mathcal{T}_{\bullet}\mathbb{M}_{\bullet}))\raisebox{.45ex}{\textup{\textbf{!`}}}\ar[dl]_(.6){\varepsilon_{\mathbb{M}_{\bullet}}
\varepsilon\raisebox{.45ex}{\textup{\textbf{!`}}}}\ar[dr]^(.6){\varepsilon_{\mathbb{N}_{\bullet}}f\raisebox{.45ex}{\textup{\textbf{!`}}}
q\raisebox{.45ex}{\textup{\textbf{!`}}}}\\
\mathbb{M}_{\bullet}&&\mathbb{N}_{\bullet}}$$ exists in
$\mathcal{D}_k^{-} ({\mathbb{A}}-{\mathrm{bimod}}).$\\ Here,
$\varepsilon_{\mathbb{M}_{\bullet}}$ and
$\varepsilon_{\mathbb{N}_{\bullet}}$ are the maps of complexes
induced by the counit of the adjunction
$\xymatrix{\mathbb{A}-\mathrm{bimod}\ar@<1ex>[r]^(.46){!}
&\mathbb{A!}-\mathrm{albimod.}\ar@<1ex>[l]^{\raisebox{.45ex}{\textup{\textbf{!`}}}}}$
The image of this roof via ! is
$$\xymatrix{&[(Tot(\mathcal{T_{\bullet}}{\mathbb{M}}_{\bullet})\raisebox{.45ex}{\textup{\textbf{!`}}}\ar[dl]_(.6){\varepsilon_{\mathbb{M}_{\bullet}}
!\varepsilon\raisebox{.45ex}{\textup{\textbf{!`}}}!}\ar[dr]^(.6){\varepsilon_{\mathbb{N}_{\bullet}}!f\raisebox{.45ex}{\textup{\textbf{!`}}}
!q\raisebox{.45ex}{\textup{\textbf{!`}}}!}])!\\
\mathbb{M}_{\bullet}!&&\mathbb{N}_{\bullet}! }$$ and is equivalent
to $$\xymatrix{
&Tot(\mathcal{T_{\bullet}}{\mathbb{M}}_{\bullet})\ar[dl]_{\varepsilon}\ar[dr]^{fq}\\
\mathbb{M}_{\bullet}!&&\mathbb{N}_{\bullet}! }. $$ This results from
the commutative diagram
$$\xymatrix@1@=9pt@M=9pt{&&Tot(\mathcal{T_{\bullet}}{\mathbb{M}}_{\bullet})\ar[dl]_{id}
\ar[dr]^(.6){\eta_{Tot(\mathcal{T_{\bullet}}{\mathbb{M}}_{\bullet})}}\\
&Tot(\mathcal{T_{\bullet}}{\mathbb{M}}_{\bullet})\ar[dl]_{\varepsilon}\ar[drrr]^(.6){fq}&&
[(Tot(\mathcal{T_{\bullet}}{\mathbb{M}}_{\bullet}))\raisebox{.45ex}{\textup{\textbf{!`}}}]!\ar[dlll]_(.6){\varepsilon_{\mathbb{M}_{\bullet}}
!\varepsilon\raisebox{.45ex}{\textup{\textbf{!`}}}!}\ar[dr]^(.7){\varepsilon_{\mathbb{N}^{\bullet}}!f\raisebox{.45ex}{\textup{\textbf{!`}}}!
q\raisebox{.45ex}{\textup{\textbf{!`}}}!}\\
\mathbb{M}_{\bullet}!&&&&\mathbb{N}_{\bullet}!}$$
(1)$\;$$\varepsilon_{\mathbb{M}_{\bullet}}!(\varepsilon\raisebox{.45ex}{\textup{\textbf{!`}}})!
\eta_{Tot(\mathcal{T_{\bullet}}{\mathbb{M}}_{\bullet})}=\varepsilon$
and\\
(2)$\;$$\varepsilon_{\mathbb{N}_{\bullet}}![f\raisebox{.45ex}{\textup{\textbf{!`}}}
q\raisebox{.45ex}{\textup{\textbf{!`}}}]!
\eta_{Tot(\mathcal{T_{\bullet}}{\mathbb{M}}_{\bullet})}=fq$\\
To check (1) observe that we have
$\varepsilon_{\mathbb{M}_{\bullet}}!\eta_{\mathbb{M}_{\bullet}!}=id_{\mathbb{M}_{\bullet}!}$
by the adjunction. In addition, the functoriality of $\eta$ induces
the commutative square
$$\xymatrix{Tot(\mathcal{T_{\bullet}}{\mathbb{M}}_{\bullet})
\ar[d]_{\eta_{Tot(\mathcal{T_{\bullet}}{\mathbb{M}}_{\bullet})}}
\ar[r]^{\varepsilon}&\mathbb{M}_{\bullet}!\ar[d]^{\eta_{\mathbb{M}_{\bullet}!}}\\
[(Tot(\mathcal{T_{\bullet}}{\mathbb{M}}_{\bullet}))\raisebox{.45ex}{\textup{\textbf{!`}}}]!
\ar[r]^(.6){(\varepsilon\raisebox{.45ex}{\textup{\textbf{!`}}})!}&[(\mathbb{M}_{\bullet}!)\raisebox{.45ex}{\textup{\textbf{!`}}}]!}$$
Thus, we have
$(\varepsilon\raisebox{.45ex}{\textup{\textbf{!`}}})!\eta_{Tot(\mathcal{T_{\bullet}}{\mathbb{M}}_{\bullet})}
=\eta_{\mathbb{M}_{\bullet}!}\varepsilon $ and by composing with
$\varepsilon_{\mathbb{M}_{\bullet}}!$  we obtain (1). Similarly one
may check (2).

To prove injectivity, let $$\xymatrix{&\mathbb{R}_{\bullet}!\ar[dl]_{r!}\ar[dr]^{f!}\\
\mathbb{M}_{\bullet}!&&\mathbb{N}_{\bullet}!}\mathrm{and}
\xymatrix{&\mathbb{S}_{\bullet}!\ar[dl]_{s!}\ar[dr]^{g!}\\
\mathbb{M}_{\bullet}!&&\mathbb{N}_{\bullet}!}$$ be equivalent roofs in
$\mathcal{D}_k^{-} ({\mathbb{A!}}-{\mathrm{albimod}}).$ One may assume that $\mathbb{R}_{\bullet}$ is a complex or relative
projective $\mathbb{A}$ bimodules. To see this, let
$\xymatrix{\mathcal{U}\mathbb{M}_{\bullet}\ar[r]^{\varepsilon}&\mathbb{M}_{\bullet}}$
the relative quasi-isomorphism with  $\mathcal{U}\mathbb{M}_i$
relative projective $\mathbb{A}$-bimodules.

Again, applying $Mor_{\mathcal{D}_k^{-}
({\mathbb{A}}-{\mathrm{bimod}})}(\mathcal{U}\mathbb{M}_{\bullet},(-))$
to the distinguished triangle
$\xymatrix{\mathbb{R}_{\bullet}\ar[r]^{r}&\mathbb{M}_{\bullet}\ar[r]&\mathcal{C}(r)_{\bullet}
\ar[r]&\mathbb{R}[1]_{\bullet},}$ in $\mathcal{D}_k^{-}
({\mathbb{A}}-{\mathrm{bimod}}),$ we obtain a long exact sequence
where
$Mor_{\mathcal{D}_k^{-}}(\mathcal{U}\mathbb{M}_{\bullet},\mathcal{C}(r)_{\bullet})=0.$

This implies the existence of a map $t$ such that the following
diagram $$\xymatrix{&\mathcal{U}\mathbb{M}_{\bullet}\ar[dl]_{t}\ar[d]^{\varepsilon}\\
\mathbb{R}_{\bullet}\ar[r]^{r}&\mathbb{M}_{\bullet}}$$ commutes. In
addition, $t$ is a relative quasi-isomorphism, since $r$ and
$\varepsilon$ are and we have the equivalent roofs $$\xymatrix{&\mathbb{R}_{\bullet}\ar[dl]_{r}\ar[dr]^{f}\\
\mathbb{M}_{\bullet}&&\mathbb{N}_{\bullet}}\mathrm{and}\xymatrix{&\mathcal{U}\mathbb{M}_{\bullet}\ar[dl]_{\varepsilon}\ar[dr]^{ft}\\
\mathbb{M}_{\bullet}&&\mathbb{N}_{\bullet}}$$ in $\mathcal{D}_k^{-}
({\mathbb{A}}-{\mathrm{bimod}})$ because of the following
commutative diagram $$\xymatrix{&&\mathcal{U}\mathbb{M}_{\bullet}\ar[dl]_{t}\ar[dr]^{id}\\
&\mathbb{R}_{\bullet}\ar[dl]_{r}\ar[drrr]^{f}&&\mathcal{U}\mathbb{M}_{\bullet}\ar[dlll]_{
\varepsilon}\ar[dr]^{ft}\\
\mathbb{M}_{\bullet}&&&&\mathbb{N}_{\bullet}}$$  This implies the
the equivalence of $$\xymatrix{&\mathbb{R}_{\bullet}!\ar[dl]_{r!}\ar[dr]^{f!}\\
\mathbb{M}_{\bullet}!&&\mathbb{N}_{\bullet}!}\mathrm{and}
\xymatrix{&\mathcal{U}\mathbb{M}_{\bullet}!\ar[dl]_{\varepsilon!}\ar[dr]^{f!t!}\\
\mathbb{M}_{\bullet}!&&\mathbb{N}_{\bullet}!}$$ in
$\mathcal{D}_k^{-} ({\mathbb{A!}}-{\mathrm{albimod}}).$ So, we may
assume that $\mathbb{R}_{\bullet}$ is a complex of relative
projective $\mathbb{A}$-bimodules. The equivalence of \begin{center}$\xymatrix{&\mathbb{R}_{\bullet}!\ar[dl]_{r!}\ar[dr]^{f!}\\
\mathbb{M}_{\bullet}!&&\mathbb{N}_{\bullet}!}\mathrm{and}
\xymatrix{&\mathbb{S}_{\bullet}!\ar[dl]_{s!}\ar[dr]^{g!}\\
\mathbb{M}_{\bullet}!&&\mathbb{N}_{\bullet}!}$\end{center} translates into the
existence of a commutative diagram
$$\xymatrix{&&X_{\bullet}\ar[dl]_{x}\ar[dr]^{p}\\
&\mathbb{R}_{\bullet}!\ar[dl]_{r!}\ar[drrr]^{f!}&&\mathbb{S}_{\bullet}!\ar[dlll]_{s!}\ar[dr]^
{g!}\\
\mathbb{M}_{\bullet}!&&&&\mathbb{N}_{\bullet}!}$$  Here,
$X_{\bullet}\in\mathcal{D}_k^{-} ({\mathbb{A!}}-{\mathrm{albimod}})$
and $x$ is a relative quasi-isomorphism such that $f!x=g!p$, (1) and
$s!p=r!x$, (2). Since $x$ is a  relative quasi-isomorphism and
$Tot\mathcal{T}_{\bullet}\mathbb{R}_{\bullet}$ is a complex of
aligned relative projective $\mathbb{A}!$-bimodules there exist $j$
such that the diagram
\begin{center}$\xymatrix{&Tot\mathcal{T}_{\bullet}\mathbb{R}_{\bullet}\ar[dl]_{j}
\ar[d]^{\varepsilon}\\
X_{\bullet}\ar[r]^{x}&\mathbb{R}_{\bullet}!}$\end{center}  is commutative.
Moreover, $j$ is a relative quasi-isomorphism because $\varepsilon$
and $x$ are. We obtain the commutative diagram
\begin{center}$\xymatrix{&&Tot\mathcal{T}_{\bullet}\mathbb{R}_{\bullet}\ar[dl]_{xj}\ar[dr]^{pj}\\
&\mathbb{R}_{\bullet}!\ar[dl]_{r!}\ar[drrr]^(.57){f!}&&\mathbb{S}_{\bullet}!\ar[dlll]_(.57){s!}\ar[dr]^
{g!}\\
\mathbb{M}_{\bullet}!&&&&\mathbb{N}_{\bullet}!}$\end{center} because
$f!xj=g!pj$, by (1) and $s!pj=r!xj $, by (2). Because ! is full
and faithful we have the isomorphism
$\xymatrix{(\mathbb{T}_{\bullet}!)\raisebox{.45ex}{\textup{\textbf{!`}}}\ar[r]^{\varepsilon_{\mathbb{T}_{\bullet}}}
&\mathbb{T}_{\bullet}}$ for all $\mathbb{T}_{\bullet}$ in
$\mathcal{D}_k^{-} ({\mathbb{A}}-{\mathrm{bimod}})$ and so
$(r!)\raisebox{.45ex}{\textup{\textbf{!`}}}$ and
$(s!)\raisebox{.45ex}{\textup{\textbf{!`}}}$ are relative
quasi-isomorphisms in $\mathcal{D}_k^{-}
({\mathbb{A}}-{\mathrm{bimod}}).$  In addition,
$\varepsilon\raisebox{.45ex}{\textup{\textbf{!`}}}$ is a relative
quasi-isomorphism and we get the commutative diagram\newpage
$$\xymatrix{&&Tot\mathcal{S}_{\bullet}\mathbb{R}_{\bullet}\ar[dl]_{(xj)\raisebox{.45ex}{\textup{\textbf{!`}}}
=\varepsilon\raisebox{.45ex}{\textup{\textbf{!`}}}}\ar[dr]^{(pj)\raisebox{.45ex}{\textup{\textbf{!`}}}
}\\
&(\mathbb{R}_{\bullet}!)\raisebox{.45ex}{\textup{\textbf{!`}}}\ar[dl]_{(r!)\raisebox{.45ex}{\textup{\textbf{!`}}}
}\ar[drrr]^(.57){(f!)\raisebox{.45ex}{\textup{\textbf{!`}}}}&&
(\mathbb{S}_{\bullet}!)\raisebox{.45ex}{\textup{\textbf{!`}}}\ar[dlll]_(.57){(s!)\raisebox{.45ex}{\textup{\textbf{!`}}}
}\ar[dr]^{(g!)\raisebox{.45ex}{\textup{\textbf{!`}}}}\\
(\mathbb{M}_{\bullet}!)\raisebox{.45ex}{\textup{\textbf{!`}}}&&&&
(\mathbb{N}_{\bullet}!)\raisebox{.45ex}{\textup{\textbf{!`}}}}$$
Finally, we
obtain the equivalence of $$\xymatrix{&\mathbb{R}_{\bullet}!\ar[dl]_{r!}\ar[dr]^{f!}\\
\mathbb{M}_{\bullet}!&&\mathbb{N}_{\bullet}!}\mathrm{and}
\xymatrix{&\mathbb{S}_{\bullet}!\ar[dl]_{s!}\ar[dr]^{g!}\\
\mathbb{M}_{\bullet}!&&\mathbb{N}_{\bullet}!}$$ by constructing
$$\xymatrix{&&Tot\mathcal{S}_{\bullet}\mathbb{R}_{\bullet}
\ar[dl]_{\varepsilon_{\mathbb{R}_{\bullet}}\varepsilon\raisebox{.45ex}{\textup{\textbf{!`}}}}
\ar[dr]^{\varepsilon_{\mathbb{S}_{\bullet}}(pj)\raisebox{.45ex}{\textup{\textbf{!`}}}}\\
&\mathbb{R}_{\bullet}\ar[dl]_{r}\ar[drrr]^(.57){f}&&
\mathbb{S}_{\bullet}\ar[dlll]_(.57){s}\ar[dr]^{g}\\
\mathbb{M}_{\bullet}&&&&\mathbb{N}_{\bullet}}$$  This is because
$f\varepsilon_{\mathbb{R}_{\bullet}}\varepsilon\raisebox{.45ex}{\textup{\textbf{!`}}}
=\varepsilon_{\mathbb{N}_{\bullet}}(f!)\raisebox{.45ex}{\textup{\textbf{!`}}}
\varepsilon\raisebox{.45ex}{\textup{\textbf{!`}}}=
\varepsilon_{\mathbb{N}_{\bullet}}(g!)\raisebox{.45ex}{\textup{\textbf{!`}}}(pj)\raisebox{.45ex}{\textup{\textbf{!`}}}
=g\varepsilon_{\mathbb{S}_{\bullet}}(pj)\raisebox{.45ex}{\textup{\textbf{!`}}}$
and\\
$s\varepsilon_{\mathbb{S}_{\bullet}}(pj)\raisebox{.45ex}{\textup{\textbf{!`}}}
=\varepsilon_{\mathbb{M}_{\bullet}}(s!)\raisebox{.45ex}{\textup{\textbf{!`}}}(pj)\raisebox{.45ex}{\textup{\textbf{!`}}}
=
\varepsilon_{\mathbb{M}_{\bullet}}(r!)\raisebox{.45ex}{\textup{\textbf{!`}}}
\varepsilon\raisebox{.45ex}{\textup{\textbf{!`}}}=
r\varepsilon_{\mathbb{R}_{\bullet}}\varepsilon\raisebox{.45ex}{\textup{\textbf{!`}}}.$
\end{proof}

We show now that the inclusion $$\xymatrix{\mathcal{D}_k^{-}
({\mathbb{A}}!-{\mathrm{albimod}})\ar[r]^{inc}&\mathcal{D}_k^{-}
({\mathbb{A}}!-{\mathrm{bimod}})}$$ is full and faithful. The lack
of an adjoint in this case requires a two step process of replacing
the top of each roof by a complex of aligned bimodules. For
$X\in\mathbb{A!}-{\mathrm{bimod}}$, let
$X^{+}:=\prod_{i\in\mathcal{C}}\varphi^{ii}X$. This defines an exact
functor
$\xymatrix{\mathbb{A!}-{\mathrm{bimod}}\ar[r]^{+}&\mathbb{A!}-{\mathrm{bimod}}}$
that preserves allowability, so also relative quasi-isomorphisms.\\
We also have the natural maps
$\xymatrix{X\ar[r]^{\beta_{X}}&X^{+}}$,
$\xymatrix{x\ar[r]&<\varphi^{ii}x>}$ and
$\xymatrix{X_{al}\ar[r]^{\gamma_{X}}&X^{+}}$,$\xymatrix{<x_{ij}>\ar[r]&<\sum_{j\geq
i}x_{ij}>.}$ \\
Also, if $X$ is aligned both $\beta_{X}^{}$ and $\gamma_{X}^{}$ are
isomorphisms and $\beta_{X}^{}=\gamma_{X}^{}\alpha_{X}^{}$, where
$\alpha$ is the natural isomorphism $\alpha :
Id_{\mathbb{A!}-albimod}\longrightarrow (-)_{al}\circ inc $.

\begin{proposition}
The functor $$\xymatrix{\mathcal{D}_k^{-}
({\mathbb{A}}!-{\mathrm{albimod}})\ar[r]^{inc}&\mathcal{D}_k^{-}
({\mathbb{A}}!-{\mathrm{bimod}})}$$ is full and faithful.
\end{proposition}
\begin{proof}We have to prove that
$$\xymatrix{Mor_{\mathcal{D}_k^{-}
({\mathbb{A}}!-{\mathrm{albimod}})}(M_{\bullet},
N_{\bullet})\ar[r]^{inc}& Mor_{\mathcal{D}_k^{-}
({\mathbb{A!}}-{\mathrm{bimod}})}(M_{\bullet}, N_{\bullet})}$$ is an
isomorphism of sets for all $M_{\bullet}$ and $N_{\bullet} \in
\mathcal{D}_k^{-} ({\mathbb{A!}}-{\mathrm{albimod}})$. First, we
prove that the map is onto. For any roof
$$\xymatrix{&X_{\bullet}\ar[dl]_{s}\ar[dr]^{f}\\M_{\bullet}&&N_{\bullet}}$$
in
$Mor_{\mathcal{D}_{k}^{-}(\mathbb{A!}-\mathrm{bimod})}(M_{\bullet},N_{\bullet})$
we have the equivalences
$$\xymatrix{&X_{\bullet}\ar[dl]_{s}\ar[dr]^{f}\\M_{\bullet}&&N_{\bullet}}\mathrm{and}
\xymatrix{&X^{+}_{\bullet}\ar[dl]_{\beta_{M_{\bullet}}^{-1}s^{+}}
\ar[dr]^{\beta_{N_{\bullet}}^{-1}f^{+}}\\M_{\bullet}&&N_{\bullet}}$$
$$\xymatrix{&X^{+}_{\bullet}\ar[dl]_{\beta_{M_{\bullet}}^{-1}s^{+}}
\ar[dr]^{\beta_{N_{\bullet}}^{-1}f^{+}}\\M_{\bullet}&&N_{\bullet}}\mathrm{and}
 \xymatrix{&X_{\bullet
al}\ar[dl]_{\alpha_{M_{\bullet}}^{-1}s_{al}}
\ar[dr]^{\alpha_{N_{\bullet}}^{-1}f_{al}}\\M_{\bullet}&&N_{\bullet}}$$

To see this, observe that since $\beta$ is a natural transformation
we have $s^{+}\beta_{X_{\bullet}}^{}=\beta_{M_{\bullet}}^{}s$ and
$f^{+}\beta_{X_{\bullet}}^{}=\beta_{N_{\bullet}}^{}f$.

In addition, because $M_{\bullet}$ and $N_{\bullet}$ are aligned
$\beta_{M_{\bullet}}^{}$ and $\beta_{N_{\bullet}}^{}$ are
isomorphisms and we obtain
$\beta_{M_{\bullet}}^{-1}s^{+}\beta_{X_{\bullet}}^{}=s$ and
$\beta_{N_{\bullet}}^{-1}f^{+}\beta_{X_{\bullet}}^{}=f$.

This implies the first equivalence because the diagram
\begin{center}$\xymatrix{&&X_{\bullet}\ar[dl]_{id}\ar[dr]^{\beta_{X_{\bullet}}^{}}\\
&X_{\bullet}\ar[dl]_{s}\ar[drrr]^(.57){f}&&X^{+}_{\bullet}\ar[dlll]_(.57){\beta_{M_{\bullet}}^{-1}s^{+}}
\ar[dr]^{\beta_{N_{\bullet}}^{-1}f^{+}}\\
M_{\bullet}&&&&N_{\bullet}}$\end{center} is commutative.

For the second equivalence, since $\gamma$ is natural we have
$s^{+}\gamma_{X_{\bullet}}^{}=\gamma_{M_{\bullet}}^{}s_{al}$ and
$f^{+}\gamma_{X_{\bullet}}^{}=\gamma_{N_{\bullet}}^{}f_{al}$.

Because $M_{\bullet}$ and $N_{\bullet}$ are aligned
$\gamma_{M_{\bullet}}^{}, \gamma_{N_{\bullet}}^{},
\alpha_{M_{\bullet}}^{}, \alpha_{N_{\bullet}}^{},
\beta_{M_{\bullet}}^{}$ and $\beta_{N_{\bullet}}^{}$ are
isomorphisms, so we get
$\beta_{M_{\bullet}}^{-1}s^{+}\gamma_{X_{\bullet}}^{}=
\beta_{M_{\bullet}}^{-1}\gamma_{M_{\bullet}}^{}s_{al}=\alpha_{M_{\bullet}}^{-1}
\gamma_{M_{\bullet}}^{-1}\gamma_{M_{\bullet}}^{}s_{al}=\alpha_{M_{\bullet}}^{-1}s_{al}$
and $\beta_{N_{\bullet}}^{-1}f^{+}\gamma_{X_{\bullet}}^{}=
\beta_{N_{\bullet}}^{-1}\gamma_{N_{\bullet}}^{}f_{al}=\alpha_{N_{\bullet}}^{-1}
\gamma_{N_{\bullet}}^{-1}\gamma_{N_{\bullet}}^{}f_{al}=\alpha_{N_{\bullet}}^{-1}f_{al}$.

The diagram
\begin{center}$\xymatrix{&&X_{\bullet
al}\ar[dl]_{id}\ar[dr]^{\gamma_{X_{\bullet}}^{}}\\
&X_{\bullet al}\ar[dl]_{\alpha_{M_{\bullet}}^{-1}s_{al}}
\ar[drrr]^(.57){\alpha_{N_{\bullet}}^{-1}f_{al}}&&
X^{+}\ar[dlll]_(.57){\beta_{M_{\bullet}}^{-1}s^{+}}\ar[dr]^{\beta_{N_{\bullet}}^{-1}f^{+}}\\
M_{\bullet}&&&&N_{\bullet}}$\end{center} is commutative and implies the second
equivalence.  Now, the surjectivity follows since the roof
\begin{center}$\xymatrix{&X_{\bullet al}\ar[dl]_{\alpha_{M_{\bullet}}^{-1}s_{al}}
\ar[dr]^{\alpha_{N_{\bullet}}^{-1}f_{al}}\\
M_{\bullet}&&N_{\bullet}}$\end{center} exists in
$Mor_{\mathcal{D}_{k}^{-}(\mathbb{A!}-\mathrm{albimod})}(M_{\bullet},N_{\bullet})$
and its image is equivalent to
\begin{center}$\xymatrix{&X_{\bullet}\ar[dl]_{s}\ar[dr]^{f}\\M_{\bullet}&&N_{\bullet}}$\end{center}
in
$Mor_{\mathcal{D}_{k}^{-}(\mathbb{A!}-\mathrm{bimod})}(M_{\bullet},N_{\bullet})$.

To prove the injectivity, let
$$\xymatrix{&X_{\bullet}\ar[dl]_{s}\ar[dr]^{f}\\M_{\bullet}&&N_{\bullet}}\mathrm{and}
\xymatrix{&Y_{\bullet}\ar[dl]_{t}\ar[dr]^{g}\\M_{\bullet}&&N_{\bullet}}$$
in
$Mor_{\mathcal{D}_{k}^{-}(\mathbb{A!}-\mathrm{albimod})}(M_{\bullet},N_{\bullet})$
equivalent in
$Mor_{\mathcal{D}_{k}^{-}(\mathbb{A!}-\mathrm{bimod})}(M_{\bullet},N_{\bullet})$.\\
Thus, we have a commutative diagram
$$\xymatrix{&&Z_{\bullet}\ar[dl]_{r}\ar[dr]^{h}\\
&X_{\bullet}\ar[dl]_{s}\ar[drrr]^{f}&&Y_{\bullet}\ar[dlll]_{t}\ar[dr]^{g}\\
M_{\bullet}&&&&N_{\bullet}}$$ where $r$ and $ s$ are relative
quasi-isomorphisms. Since the alignment functor preserves relative
quasi-isomorphisms and $M_{\bullet}$, $N_{\bullet}$, $X_{\bullet}$
and $Y_{\bullet}$ are complexes of aligned $\mathbb{A!}$ bimodules
we have the commutative diagram $$\xymatrix{&&Z_{\bullet
al}\ar[dl]_{r_{al}}\ar[dr]^{h_{al}}\\
&X_{\bullet}\ar[dl]_{s}\ar[drrr]^{f}&&Y_{\bullet}\ar[dlll]_{t}\ar[dr]^{g}\\
M_{\bullet}&&&&N_{\bullet}}$$ which implies the equivalence of roofs
$$\xymatrix{&X_{\bullet}\ar[dl]_{s}\ar[dr]^{f}\\M_{\bullet}&&N_{\bullet}}
\xymatrix{&Y_{\bullet}\ar[dl]_{t}\ar[dr]^{g}\\M_{\bullet}&&N_{\bullet}}$$
in
$Mor_{\mathcal{D}_{k}^{-}(\mathbb{A!}-\mathrm{albimod})}(M_{\bullet},N_{\bullet})$,
and so the injectivity of $inc$.

\end{proof}

The proof of  theorem 4.1. follows now easily combining propositions
4.2. and 4.3. In particular, we obtain the following theorem of [2],
due to M. Gerstenhaber and S. D. Schack.

\begin{corollary}{(Special Cohomology Comparison Theorem)}\\
The functor ! induces an isomorphism of relative Yoneda cohomologies
$$Ext^{\bullet}_{\mathbb{A}-\mathbb{A}}((-),(-))\cong
Ext^{\bullet}_{\mathbb{A}!-\mathbb{A}!}((-)!,(-)!).$$ In particular,
we have an isomorphism of relative Hochschild cohomologies
$$H^{\bullet}(\mathbb{A},(-))\cong H^{\bullet}(\mathbb{A}!,(-)!).$$
\end{corollary}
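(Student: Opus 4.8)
The plan is to deduce this corollary directly from Theorem 5.1, combined with the identification of relative Yoneda cohomology with morphisms in the relative derived category supplied by Theorem 4.6. The first observation is that, since $\mathbb{A}$-bimodules are precisely one-sided modules over the enveloping diagram $\mathbb{A}^e$ (as recorded in the Preliminaries), Theorem 4.6 applies verbatim with $\mathbb{A}$ replaced by $\mathbb{A}^e$, and likewise with the single algebra $\mathbb{A}!$ replaced by its enveloping algebra $(\mathbb{A}!)^e$. This yields natural isomorphisms
$$\mathbf{Ext}^{i}_{\mathbb{A}^e,\mathbf{k}}(\mathbb{M},\mathbb{N})\simeq Mor_{\mathcal{D}^{-}_{k}(\mathbb{A}^e-\mathbf{mod})}(\mathbb{M}_{\bullet},\mathbb{N}_{\bullet}[i]),\qquad \mathbf{Ext}^{i}_{(\mathbb{A}!)^e,\mathbf{k}}(\mathbb{M}!,\mathbb{N}!)\simeq Mor_{\mathcal{D}^{-}_{k}((\mathbb{A}!)^e-\mathbf{mod})}(\mathbb{M}_{\bullet}!,\mathbb{N}_{\bullet}![i]).$$

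Next I would invoke Theorem 5.1. Applying the full-and-faithful isomorphism there with the second argument $\mathbb{N}_\bullet$ replaced by the shifted complex $\mathbb{N}_{\bullet}[i]$ produces an isomorphism between $Mor_{\mathcal{D}^{-}_{k}(\mathbb{A}^e-\mathbf{mod})}(\mathbb{M}_{\bullet},\mathbb{N}_{\bullet}[i])$ and $Mor_{\mathcal{D}^{-}_{k}((\mathbb{A}!)^e-\mathbf{mod})}(\mathbb{M}_{\bullet}!,(\mathbb{N}_{\bullet}[i])!)$. To splice this into the chain above one must observe that the functor $!$ commutes with the translation functor, that is $(\mathbb{N}_{\bullet}[i])!=(\mathbb{N}_{\bullet}!)[i]$; this holds because $!$ is exact and $k$-linear, hence induces a triangulated functor on the relative derived categories, and the shift is merely a renumbering of the complex. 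Concatenating the three isomorphisms yields the desired $Ext^{\bullet}_{\mathbb{A}^e,\mathbf{k}}((-),(-))\cong Ext^{\bullet}_{(\mathbb{A}!)^e,\mathbf{k}}((-)!,(-)!)$, and its naturality in both variables follows from the naturality of each constituent.

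For the final assertion about Hochschild cohomology, I would recall that relative Hochschild cohomology is the special case of relative Yoneda cohomology in which the first variable is the diagonal bimodule, namely $\mathbf{H}^{\bullet}(\mathbb{A},\mathbb{M})=Ext^{\bullet}_{\mathbb{A}^e,\mathbf{k}}(\mathbb{A},\mathbb{M})$. Specializing the isomorphism just obtained to the case where the first variable is $\mathbb{A}$ itself, and using that $!$ sends the diagonal bimodule $\mathbb{A}$ to the diagonal bimodule $\mathbb{A}!$ — i.e. $(\mathbb{A})!=\mathbb{A}!$, which is immediate by comparing the definition of $\mathbb{M}!$ at $\mathbb{M}=\mathbb{A}$ with the $k$-bimodule description $\mathbb{A}!=\prod_{i}\coprod_{i\leq j}\mathbb{A}^i\varphi^{ij}$ and checking the action maps reduce to the multiplication of $\mathbb{A}!$ — gives $\mathbf{H}^{\bullet}(\mathbb{A},(-))\cong\mathbf{H}^{\bullet}(\mathbb{A}!,(-)!)$.

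Each step is routine once Theorems 5.1 and 4.6 are in hand, so I do not expect a serious obstacle. The single point deserving genuine (if brief) care is the compatibility of $!$ with the translation functor, i.e. confirming $(\mathbb{N}_{\bullet}[i])!=(\mathbb{N}_{\bullet}!)[i]$, so that the two Ext-level gradings are matched by the same functor; this is where an indexing and sign check is warranted, but it poses no real difficulty since $!$ is additive and exact and therefore respects the triangulated structure.
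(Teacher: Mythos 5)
Your proposal is correct and matches the paper's intended argument: the paper gives no separate proof for this corollary, but it is obtained exactly as in Corollary 4.10, by combining Theorem 4.6 (applied to $\mathbb{A}^e$ and $(\mathbb{A}!)^e$) with the full-and-faithfulness of $!$ from Theorem 5.1, and then specializing the first variable to the diagonal bimodule for the Hochschild statement. Your extra check that $!$ commutes with the shift is a reasonable point of care that the paper leaves implicit.
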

\begin{proof}
$$Ext^{i}_{\mathbb{A}-\mathbb{A}}(\mathbb{M},\mathbb{N})\cong\
Mor_{\mathcal{D}^{-}_{k}(\mathbb{A}-\mathrm{bimod})}
(\mathbb{M}_{\bullet},\mathbb{N}_{\bullet}{[i]})\cong$$
$$\cong Mor_{\mathcal{D}^{-}_{k}(\mathbb{A}!-\mathrm{bimod})}
(\mathbb{M}_{\bullet}!,\mathbb{N}_{\bullet}{[i]}!)\cong
Ext^{i}_{\mathbb{A}!-\mathbb{A}!}(\mathbb{M}!,\mathbb{N}!).$$
\end{proof}

$\mathbf{Note:}$ By taking a very different approach Wendy Lowen and
Michel Van Den Bergh also proved in [8] that the functor ! is full
and faithful.

\appendix
\section{theorem 2.4.}

\begin{theorem} 1. The functor
$!:\mathbb{A}$-bimod$\longrightarrow\mathbb{A}!$-albimod admits a
left adjoint
$\raisebox{.45ex}{\textup{\textbf{!`}}}:\mathbb{A}!$-albimod$\longrightarrow\mathbb{A}$-bimod.\\
2. There are natural isomorphisms
$\mathcal{T}_p\mathbb{N}\raisebox{.45ex}{\textup{\textbf{!`}}}\longrightarrow\mathcal{S}_p\mathbb{N}$
which induce a natural isomorphism of complexes
$(\mathcal{T}_{\bullet}\mathbb{N}\longrightarrow\mathbb{N}!)\raisebox{.45ex}{\textup{\textbf{!`}}}$
 and
$(\mathcal{S}_{\bullet}\mathbb{N}\longrightarrow\mathbb{N})$.

\end{theorem}
\begin{proof}
1. The left adjoint is the restriction of a functor
$\raisebox{.45ex}{\textup{\textbf{!`}}}:\mathbb{A}!$-bimod$\longrightarrow\mathbb{A}$-bimod.
For any $\mathbb{A}!$-bimodule $X$ and $i\in\mathcal{C}$ we define
$X\raisebox{.45ex}{\textup{\textbf{!`}}}^i$ as the colimit of a
particular functor over the poset $\mathcal{C}_i^{1}$ whose elements
are the 1-simplices of $\mathcal{C}_i=\{j | j\geq i\}$. The ordering
is $\sigma\ll\tau \Leftrightarrow\sigma=\tau$ or $\sigma$ is
degenerate ($d\sigma=c\sigma$), $\tau$ is not, and either
$d\sigma=d\tau$ or $d\sigma=c\tau$. We denote by
$X^{pq}=\varphi^{pp}X\varphi^{qq}$. Define
$F_{X}^i:\mathcal{C}_i^1\longrightarrow\mathbb{A}^i$-bimod on each
object $\sigma$ to be the coequalizer of the $\mathbb{A}^i$-bimodule
maps $X^{i,d\sigma}\otimes_k\mathbb{A}^{d\sigma}\rightrightarrows
X^{i,c\sigma}\otimes_{\mathbb{A}^{c\sigma}}\mathbb{A}^i$ given by
$x\otimes a\rightarrow
x\varphi^{d\sigma,c\sigma}\otimes\varphi^{i,d\sigma}(a)$ and
$x\otimes a\rightarrow xa\varphi^{d\sigma,c\sigma}\otimes1$. For
$\sigma\ll\tau$ in $\mathcal{C}_i^1$, the map
$F_{X}^i(\sigma\tau):F_{X}^i(\sigma)\rightarrow F_{X}^i(\tau)$ is
defined by $\overline{x\otimes
a}\rightarrow\overline{x\varphi^{c\sigma,c\tau}\otimes a}$.

Let $X\raisebox{.45ex}{\textup{\textbf{!`}}}^i:=colim F_{X}^i$,
$\forall i\in\mathcal{C}$ and $\iota_{\sigma}^i$  the canonical map
$F_{X}^i(\sigma)\rightarrow\
X\raisebox{.45ex}{\textup{\textbf{!`}}}^i$. To show that
$X\raisebox{.45ex}{\textup{\textbf{!`}}}$ is an
$\mathbb{A}$-bimodule, for each $h\leq i$ in $\mathcal{C}$, we have
to define a map
$T_{X\raisebox{.45ex}{\textup{\textbf{!`}}}}^{hi}:X\raisebox{.45ex}{\textup{\textbf{!`}}}
^i\rightarrow\ X\raisebox{.45ex}{\textup{\textbf{!`}}}^h$ such that
$T_{X\raisebox{.45ex}{\textup{\textbf{!`}}}}^{hj}=T_{X\raisebox{.45ex}{\textup{\textbf{!`}}}
}^{hi}T_{X\raisebox{.45ex}{\textup{\textbf{!`}}}}^{ij}$ if $h\leq
i\leq j$.

First, we have a natural transformation
$\Gamma_{X}^{hi}:F_{X}^i\rightarrow _{hi}|-|_{hi}\circ F_{X}^h\circ
inc_{hi}$, where
$inc_{hi}:\mathcal{C}_i^1\rightarrow\mathcal{C}_h^1$ is the
inclusion functor induced by $\mathcal{C}_i\subset\mathcal{C}_h$ and
$_{hi}|-|_{hi}:\mathbb{A}^h$-bimod$\rightarrow\mathbb{A}^i$-bimod is
the forgetful functor. To define $\Gamma_{X}^{hi}$ observe that, for
each $\sigma\in\mathcal{C}^1$, left multiplication by $\varphi^{hi}$
is an $\mathbb{A}^i$-$\mathbb{A}^{c\sigma}$ bimodule map
$:X^{i,c\sigma}\rightarrow X^{h,c\sigma}$, while
$\varphi^{hi}:\mathbb{A}^i\rightarrow\mathbb{A}^h$ is an
$\mathbb{A}^{c\sigma}$-$\mathbb{A}^i$ bimodule map. The map of
$\mathbb{A}^i$-bimodules
$(\varphi^{hi}\cdot-)\otimes\varphi^{hi}:X^{i,c\sigma}\otimes_{\mathbb{A}^{c\sigma}}
\mathbb{A}^i\rightarrow_{hi}|X^{h,c\sigma}\otimes_{\mathbb{A}^{c\sigma}}\mathbb{A}^i|_{hi}$
induces the $\mathbb{A}^i$-bimodule map
$(\Gamma_{X}^{hi})_{\sigma}:F_{X}^i(\sigma)\rightarrow_{hi}|F_{X}^h(\sigma)|_{hi}$
given by $\overline{x\otimes
a}\rightarrow\overline{\varphi^{hi}x\otimes\varphi^{hi}(a)}$.

Second, let $T_{X\raisebox{.45ex}{\textup{\textbf{!`}}}}^{hi}$ be
the composite of maps
$X\raisebox{.45ex}{\textup{\textbf{!`}}}^i=colim F_{X}^i\rightarrow
colim(_{hi}|-|_{hi}\circ F_{X}^h\circ
inc_{hi})=_{hi}|colim(F_{X}^h\circ inc_{hi})|_{hi}\rightarrow
_{hi}|colim
F_{X}^h|_{hi}=_{hi}|X\raisebox{.45ex}{\textup{\textbf{!`}}}^h|_{hi}$.
Thus we have
$T_{X\raisebox{.45ex}{\textup{\textbf{!`}}}}^{hi}(\iota_{\sigma}^i(\overline{x\otimes
a}))=\iota_{\sigma}^h(\overline{\varphi^{hi}x\otimes\varphi^{hi}(a)})$.

One may easily check now the identity
$T_{X\raisebox{.45ex}{\textup{\textbf{!`}}}}^{hj}=T_{X\raisebox{.45ex}{\textup{\textbf{!`}}}
}^{hi}T_{X\raisebox{.45ex}{\textup{\textbf{!`}}}}^{ij}$ for $h\leq
i\leq j$, so $X\raisebox{.45ex}{\textup{\textbf{!`}}}$ is an
$\mathbb{A}$-bimodule.

So far we have defined $\raisebox{.45ex}{\textup{\textbf{!`}}}$ on
the objects of $\mathbb{A}!$-bimod so we need to define it on  maps.
Let $g: X\rightarrow Y$ be an $\mathbb{A}!$-bimodule map. The
restriction of $g$ to $X^{ij}$ is an $\mathbb{A}^i$-$\mathbb{A}^j$
bimodule map, $g:X^{ij}\rightarrow Y^{ij}$, and for
$\sigma\in\mathcal{C}^1$, the $\mathbb{A}^i$-bimodule map $g\otimes
id:X^{i,c\sigma}\otimes_{\mathbb{A}^{c\sigma}}\mathbb{A}^i\rightarrow
Y^{i,c\sigma}\otimes_{\mathbb{A}^{c\sigma}}\mathbb{A}^i$ induces the
map $\widetilde{g}_{\sigma}^i:F_{X}^i(\sigma)\rightarrow
F_{Y}^i(\sigma)$ defined by $\overline{x\otimes
a}\rightarrow\overline{g(x)\otimes a}$. Its easy to check the
naturality of $\widetilde{g}_{\sigma}^i$ since $g$ is a
$\mathbb{A}!$-bimodule map and by taking the colimits we obtain an
$\mathbb{A}^i$-bimodule map
$g\raisebox{.45ex}{\textup{\textbf{!`}}}^i:X\raisebox{.45ex}{\textup{\textbf{!`}}}^i\rightarrow
Y\raisebox{.45ex}{\textup{\textbf{!`}}}^i$, given by
$g\raisebox{.45ex}{\textup{\textbf{!`}}}^i(\iota_{\sigma}^i(\overline{x\otimes
a}))=\iota_{\sigma}^i(\widetilde{g}_{\sigma}^i(\overline{x\otimes
a}))$. These are the components of an $\mathbb{A}$-bimodule map,
$i.e. T_{Y\raisebox{.45ex}{\textup{\textbf{!`}}}}^{hi}\circ
g\raisebox{.45ex}{\textup{\textbf{!`}}}^i=g\raisebox{.45ex}{\textup{\textbf{!`}}}^h\circ
T_{X\raisebox{.45ex}{\textup{\textbf{!`}}}}^{hi}$ for $h\leq i$
because of the commutative diagram
$$\xymatrix{F_{X}^i\ar[rr]^-{\Gamma_{X}^{hi}}\ar[d]^{\widetilde{g}^i}&&_{hi}|-|_{hi}\circ
F_{X}^h\circ inc_{hi}\ar[d]^{id\circ \widetilde{g}^h\circ{id}}\\
F_{Y}^i\ar[rr]^-{\Gamma_{Y}^{hi}}&&_{hi}|-|_{hi}\circ
F_{Y}^h\circ{inc}_{hi}}$$ In addition, one has
$(g_1g_2)\raisebox{.45ex}{\textup{\textbf{!`}}}=(g_1)\raisebox{.45ex}{\textup{\textbf{!`}}}\circ
(g_2)\raisebox{.45ex}{\textup{\textbf{!`}}}$ and
$(id)\raisebox{.45ex}{\textup{\textbf{!`}}}=id$ since
$\widetilde{g_1g_2}^i=\widetilde{g_1}^i\widetilde{g_2}^i$ and
$\widetilde{id}^i=id$, so $\raisebox{.45ex}{\textup{\textbf{!`}}}$
is a functor.

We now prove that the functor constructed above is a left adjoint to
$!$, when restricted to $\mathbb{A}$-albimod. Let $X$ be an aligned
$\mathbb{A}!$-bimodule. For $i\leq j$ we define
$\eta_{X}^{ij}:X^{ij}\rightarrow
(X\raisebox{.45ex}{\textup{\textbf{!`}}})!^{ij}$ to be the
$\mathbb{A}^i$-$\mathbb{A}^j$ bimodule map
$\eta_{X}^{ij}=\iota_{(j\leq j)}^i(x\otimes 1)\varphi^{ij}$. One may
check that for $h\leq i\leq j\leq q$, $a^h\in\mathbb{A}^h$,
$a^j\in\mathbb{A}^j$ and $x\in X^{ij}$ we have
$\eta_{X}^{hj}(a^h\varphi^{hi}\cdot x )= a^h\varphi^{hi}\cdot
\eta_{X}^{ij}(x)$ and $\eta_{X}^{iq}(x\cdot
a^j\varphi^{jq})=\eta_{X}^{ij}(x)\cdot a^j\varphi^{jq}$ so the
family of maps $\eta_{X}^{ij}$ determine an $\mathbb{A}!$-bimodule
natural map $\eta_{X}:X\rightarrow
(X\raisebox{.45ex}{\textup{\textbf{!`}}})!$.

Let $\mathbb{N}\in\mathbb{A}$-bimod. To define the components of the
counit
$\varepsilon_{\mathbb{N}}^i:(\mathbb{N}!)\raisebox{.45ex}{\textup{\textbf{!`}}}^i=colim
F_{\mathbb{N}!}^i\rightarrow\mathbb{N}^i$, we define a family of
$\mathbb{A}^i$-bimodule maps
$\varepsilon_{\mathbb{N}}^{i,\sigma}:F_{\mathbb{N}!}^i(\sigma)\rightarrow\mathbb{N}^i$
such that $\varepsilon_{\mathbb{N}}^{i,\tau}\circ
F_{\mathbb{N}!}^i(\sigma\tau)=\varepsilon_{\mathbb{N}}^{i,\sigma}$,
for $\sigma\ll\tau$ in $\mathcal{C}_i^1$ and use the universal
property of colimits. The $\mathbb{A}^i$-bilinear function
$\mathbb{N}^i\varphi^{i,c\sigma}\times\mathbb{A}^i\rightarrow\mathbb{N}^i$,
$(n\varphi^{i.c\sigma}, a)\rightarrow na$ is
$\mathbb{A}^{c\sigma}$-balanced and the induced
$\mathbb{A}^i$-bimodule map
$\mathbb{N}^i\varphi^{i,c\sigma}\otimes_{\mathbb{A}^{c\sigma}}\mathbb{A}^i
\rightarrow\mathbb{N}^i$ vanishes on $\{n\varphi^{d\sigma,
c\sigma}\otimes\varphi^{i.d\sigma}(a)-na\varphi^{d\sigma.c\sigma}\otimes
1 $\;$|$\;$ n\in\mathbb{N}^i\varphi^{i,d\sigma},
a\in\mathbb{A}^{d\sigma}\}$ so for each $\sigma\in\mathcal{C}_i^1$,
we obtain the $\mathbb{A}^i$-bimodule map
$\varepsilon_{\mathbb{N}}^{i,\sigma}:F_{\mathbb{N}!}^i(\sigma)\rightarrow\mathbb{N}^i$,
$\overline{n\varphi^{i,c\sigma}\otimes a}\rightarrow na$. We have
that $\varepsilon_{\mathbb{N}}^{i,\tau}\circ
F_{\mathbb{N}!}^i(\sigma\tau)(n\varphi^{i,c\sigma}\otimes
a)=\varepsilon_{\mathbb{N}}^{i,\tau}(n\varphi^{i,c\sigma}\cdot\varphi^{c\sigma,c\tau}\otimes
a)=na=\varepsilon_{\mathbb{N}}^{i,\sigma}(n\varphi^{i,c\sigma}\otimes
a)$ and thus the map $\varepsilon_{\mathbb{N}}^i$ is given by
$\varepsilon_{\mathbb{N}}^i(\iota_{\sigma}^i(\overline{n\varphi^{i,c\sigma}\otimes
a)})=na$. The maps $\varepsilon_{\mathbb{N}}^i$ determine a natural
map
$\varepsilon_{\mathbb{N}}:(\mathbb{N}!)\raisebox{.45ex}{\textup{\textbf{!`}}}\rightarrow\mathbb{N}$
of $\mathbb{A}$-bimodules since for $h\leq i$ and
$\sigma\in\mathcal{C}_i^1$ we have
$T_{\mathbb{N}}^{hi}\varepsilon_{\mathbb{N}}^{i,c\sigma}(\overline{n\varphi^{i,c\sigma}\otimes
a})=T_{\mathbb{N}}^{hi}(na)=T_{\mathbb{N}}^{hi}(n)\cdot
a=T_{\mathbb{N}}^{hi}(n)\varphi^{hi}(a)$ while
$\varepsilon_{\mathbb{N}}^{h,\sigma}(\Gamma_{\mathbb{N}!}^{hi})_{\sigma}
(\overline{n\varphi^{i,c\sigma}\otimes
a})=\varepsilon_{\mathbb{N}}^{h,\sigma}(\overline{\varphi^{hi}\cdot
n\varphi^{i,c\sigma}\otimes
\varphi^{hi}(a)})=\varepsilon_{\mathbb{N}}^{h,\sigma}(\overline
{T_{\mathbb{N}}^{hi}(n)\varphi^{h,c\sigma}\otimes\varphi^{hi}(a)})=T_{\mathbb{N}}^{hi}(n)
\varphi^{hi}(a).$

To finish the proof we show that $\eta$ and $\varepsilon$ form an
adjoint pair. To see that
$\varepsilon_{\mathbb{N}}!\circ\eta_{\mathbb{N}!}=id_{\mathbb{N}!} $
it is enough to check this on each $\mathbb{N}^i\varphi^{ij}$ and
$\varepsilon_{\mathbb{N}}!(\eta_{\mathbb{N}!}(n\varphi^{ij})=\varepsilon_{\mathbb{N}}!
(\iota_{(j\leq j)}^i(n\varphi^{ij}\otimes
1)\varphi^{ij})=\varepsilon_{\mathbb{N}}^i(\iota_{(j\leq
j)}^i(n\varphi^{ij}\otimes 1))\varphi^{ij}=(n\cdot
1)\varphi^{ij}=n\varphi^{ij}$, as required.

Last, for each $X\in\mathbb{A}$-albimod we need to verify that
$\varepsilon_{X\raisebox{.45ex}{\textup{\textbf{!`}}}}\circ\
\eta_{X}\raisebox{.45ex}{\textup{\textbf{!`}}}=id_{X\raisebox{.45ex}{\textup{\textbf{!`}}}}$.
This can be checked on a set of $\mathbb{A}^i$-bimodule generators
for each component $X\raisebox{.45ex}{\textup{\textbf{!`}}}^i$ and
the set $\{\iota_{(j\leq j)}^i(x\otimes 1)$\;$|$\;$ j\geq i, x\in
X^{ij}\}$ has this property. Since
$(\varepsilon_{X\raisebox{.45ex}{\textup{\textbf{!`}}}}^i\circ
\eta_{X}\raisebox{.45ex}{\textup{\textbf{!`}}}^i)(\iota_{(j\leq
j)}^i(x\otimes
1)))=\varepsilon^i_{X\raisebox{.45ex}{\textup{\textbf{!`}}}}(\iota_{(j\leq
j)}^i(\eta_{X}(x)\otimes
1))=\varepsilon_{X\raisebox{.45ex}{\textup{\textbf{!`}}}}^i(\iota_{(j\leq
j)}^i(\iota_{(j\leq j)}^i(x\otimes 1)\varphi^{ij}\otimes
1))=\iota_{(j\leq j)}^i(x\otimes 1)\cdot 1$ we obtain the required
identity.

2. Because both $\mathcal{T}_p\mathbb{N}$ and
$\mathcal{S}_p\mathbb{N}$ are coproducts and
$\raisebox{.45ex}{\textup{\textbf{!`}}}$, as a left adjoint,
preserves colimits it is enough to find natural isomorphisms
$\gamma^{\sigma}:(\mathcal{T}_p^{\sigma})\raisebox{.45ex}{\textup{\textbf{!`}}}\longrightarrow\mathcal{S}_p^{\sigma}$
such that, for $0\leq r\leq p$, the following square
$$\xymatrix{(\mathcal{T}_{p}^{\sigma}\mathbb{N})\raisebox{.45ex}{\textup{\textbf{!`}}}\ar[r]^{(d_r^{\mathcal{T},\sigma})
\raisebox{.45ex}{\textup{\textbf{!`}}}}
\ar[d]_{\gamma_{\mathbb{N}}^{\sigma}}&(\mathcal{T}_{p-1}^{\sigma_r}\mathbb{N})\raisebox{.45ex}{\textup{\textbf{!`}}}\ar[d]^
{\gamma_{\mathbb{N}}^{\sigma_r}}\\
\mathcal{S}_p^{\sigma}\mathbb{N}\ar[r]^{d_{r}^{\mathcal{S},\sigma}}&
\mathcal{S}_{p-1}^{\sigma_r}\mathbb{N}}$$ commutes, where, when
$p=0$, we interpret the right column as the counit
$\varepsilon_\mathbb{N}:(\mathbb{N}!)\raisebox{.45ex}{\textup{\textbf{!`}}}\longrightarrow\mathbb{N}$
and $d_0^{\mathcal{T}}$ and $d_0^{\mathcal{S}}$ as the
augmentations. To construct the isomorphisms, for $p>0$, observe
that, for each $\sigma\in\mathcal{C}^{[p]}$, the diagram
$$\xymatrix{\mathbb{A}-{\mathrm{bimod}}\ar[r]^{!}\ar[d]_{(d\sigma)^{*}}&
\mathbb{A}!-\mathrm{albimod}\ar[d]^{(-)^{d\sigma,c\sigma}}\\
\mathbb{A}^{d\sigma}\mathrm{-bimod}\ar[d]_{_{d\sigma,c\sigma}|-|_{d\sigma,c\sigma}}&
\mathbb{A}^{d\sigma}-\mathrm{mod}-\mathbb{A}^{c\sigma}\ar[d]^{_{d\sigma,c\sigma}|-|}\\
\mathbb{A}^{c\sigma}-\mathrm{bimod}\ar[r]^{id}&\mathbb{A}^{c\sigma}-\mathrm{bimod}}$$
is commutative. Since each functor in it admits a left adjoint and
$_{d\sigma,c\sigma}|-|_{d\sigma,c\sigma}\circ(d\sigma)^{*}=_{d\sigma,c\sigma}|-|
\circ(-)^{d\sigma,c\sigma}\circ{!}$ we have the isomorphisms,
natural in $N$,
$$\gamma_{N}^{\sigma}:(L_{d\sigma,c\sigma}(\mathbb{A}^{d\sigma}
\otimes_{\mathbb{A}^{c\sigma}}N))\raisebox{.45ex}{\textup{\textbf{!`}}}\longrightarrow(d\sigma)_{!}(\mathbb{A}^{d\sigma}
\otimes_{\mathbb{A}^{c\sigma}}N\otimes_{\mathbb{A}^{c\sigma}}\mathbb{A}^{d\sigma}).$$

The $\mathbb{A}^i$ bimodule
$((L_{d\sigma,c\sigma}(\mathbb{A}^{d\sigma}
\otimes_{\mathbb{A}^{c\sigma}}N))
)\raisebox{.45ex}{\textup{\textbf{!`}}}^i$ is generated by
$\{\iota_{(j\leq j)}^i((1\otimes n)\otimes 1)$\;$|$\;$j\geq c\sigma,
n\in N\}$ for $i\leq d\sigma$ and is $0$ if $i\nleq d\sigma$.
Tracing through the adjunction we obtain that, for each
$i\in\mathcal{C}$, $\gamma_{N}^{\sigma,i}(\iota_{(j\leq
j)}^i((1\otimes n)\otimes 1))=1\otimes n\otimes1.$

For all $\mathbb{N}$ and $\sigma$ we define
$\gamma_{\mathbb{N}}^{\sigma}=\gamma_{\mathbb{N}^{c\sigma}}^{\sigma}$
and, for $p>0$, we have that
$(d_r^{\mathcal{T},\sigma})\raisebox{.45ex}{\textup{\textbf{!`}}}^i(\iota_{(j\leq
j)}^i(\\(1\otimes n)\otimes 1))=\iota_{(j\leq
j)}^i(d_r^{\mathcal{T},\sigma}(1\otimes n)\otimes 1)=\iota_{(j\leq
j)}^i((1\otimes T_{\mathbb{N}}^{c\sigma_r,d\sigma}(n))\otimes 1)$,
while $d_r^{\mathcal{S},\sigma}(1\otimes n\otimes 1)=1\otimes
T_{\mathbb{N}}^{c\sigma_r,c\sigma}(n)\otimes 1$, so the square is
commutative. When $p=0$, we obtain, for $\sigma\in\mathcal{C}^{[0]}$
and $i\leq d\sigma=c\sigma\leq j$ that
$\varepsilon_{\mathbb{N}}^i\circ(\varepsilon^{\mathcal{T},
\sigma})\raisebox{.45ex}{\textup{\textbf{!`}}}^i(\iota_{(j\leq
j)}^i((1\otimes n)\otimes
1))=\varepsilon_{\mathbb{N}}^i(\iota_{(j\leq
j)}^i(\varepsilon_{ij}^{\mathcal{T},\sigma}(1\otimes n)\otimes
1))=\varepsilon_{\mathbb{N}}^i(\iota_{(j\leq
j)}^i(T_{\mathbb{N}}^{i,c\sigma}(n)\varphi^{ij}\otimes
1))=T_{\mathbb{N}}^{i,c\sigma}(n)=\varepsilon^{\mathcal{S},\sigma,i}(1\otimes
n\otimes
1)=\varepsilon^{\mathcal{S},\sigma,i}\circ\gamma_{\mathbb{N}}^{\sigma,i}(\iota_{(j\leq
j)}^i((1\otimes n)\otimes 1)),$ so the square commutes in this case
too.
\end{proof}

\end{document}